\numberwithin{equation}{section}
\theoremstyle{plain}
\newtheorem{thm}{Theorem}[section]
\newtheorem{lemma}[thm]{Lemma}
\newtheorem{prop}[thm]{Proposition}
\theoremstyle{definition}
\theoremstyle{remark}
\newtheorem{remark}[thm]{Remark}
\newcommand{\R}{\mathbb{R}}
\newcommand{\C}{\mathbb{C}}
\newcommand{\eps}{\varepsilon}
\renewcommand{\epsilon}{\varepsilon}
\renewcommand{\rho}{\varrho}
\newcommand{\Sh}{\mathbb{S}}
\DeclareMathOperator{\arcosh}{arcosh}
\DeclareMathOperator{\InjRad}{InjRad}
\DeclareMathOperator{\SO}{SO}
\title{$L^p$ Norms of eigenfunctions on regular graphs and on the sphere}
\author{Shimon Brooks}
\address{Department of Mathematics, Bar-Ilan University, Ramat-Gan, 5290002 Israel}
\email{brookss@math.biu.ac.il}
\author{Etienne Le Masson}
\address{School of Mathematics, University of Bristol, University Walk, Bristol, BS8 1TW, UK}
\email{etienne.lemasson@bristol.ac.uk}
\thanks{E.L.M. was supported by the Marie Sk{\l}odowska-Curie Individual Fellowship grant 703162.  S.B. was supported by Israel Science Foundation grant 1119/13.}
\begin{document}

\maketitle 
\begin{abstract}
We prove upper bounds on the $L^p$ norms of eigenfunctions of the discrete Laplacian on regular graphs. We then apply these ideas to study the $L^p$ norms of joint eigenfunctions of the Laplacian and an averaging operator over a finite collection of algebraic rotations of the $2$-sphere. Under mild conditions, such joint eigenfunctions are shown to satisfy for large $p$ the same bounds as those known for Laplace eigenfunctions on a surface of non-positive curvature.
\end{abstract}

\section{Introduction}
Let $(M,g)$ be an $n$-dimensional compact Riemannian manifold and $\Delta$ the Laplace-Beltrami operator. If $u$ is an eigenfunction, $\Delta u = \lambda^2 u$, Sogge proved \cite{Sog88} that
$$ \| u \|_p \lesssim \lambda^{\sigma(n,p)} \|u\|_2,$$
with 
\begin{equation}\label{e:Sogge}
\sigma(n,p) = 
\begin{cases}
n\left(\frac12 - \frac1p\right) -\frac12 & \text{ if } \frac{2(n+1)}{n-1} \leq p \leq \infty\\
\frac{n-1}2\left(\frac12 - \frac1p \right) & \text{ if } 2 \leq p \leq \frac{2(n+1)}{n-1}.
\end{cases}
\end{equation}
These bounds are saturated on the sphere, in the case of high $p$ by the zonal spherical harmonics (concentration around points) and for low $p$ by Gaussian beams (concentration on closed geodesics).

Since the pioneering work of Sogge, the study of the $L^p$ norms of eigenfunctions has attracted a lot of interest as a way to understand how the geometry and the dynamics of the underlying space influences the shape of the eigenfunctions.
In non-positive curvature, Berard showed \cite{Berard} that the sup norm bound can be improved by a logarithmic factor,
and more recently Hassel and Tacy \cite{HT} showed that this improvement can be maintained for all $L^p$ norms with $p > \frac{2(n+1)}{n-1}$, yielding
\begin{equation}\label{e:log improv}
 \| u\|_p \lesssim_p \frac{\lambda^{\sigma(n,p)}}{\sqrt{\log \lambda}} \|u\|_2.
\end{equation}
The field has expanded in many directions; see also eg. \cite{SoggeZelditch}, \cite{KochTataruZworski}, \cite{BurqGerardTzvetkov}, and \cite{HR16} for related results and their generalizations.

In this article, instead of Riemannian manifolds, we first consider the setting of regular graphs. These graphs can be seen as discrete analogues of hyperbolic surfaces and present 
similar characteristics to negatively curved manifolds. Our main result in this context is a bound on eigenfunctions of the discrete Laplacian (or adjacency matrix) analogous to \eqref{e:log improv}. We then apply our graph methods to eigenfunctions satisfying some symmetry conditions on the sphere. More precisely, we look at joint eigenfunctions of the Laplacian and an averaging operator over a finite set of algebraic rotations satisfying mild non-degeneracy conditions, and show that for large $p$ such eigenfunctions satisfy the same bounds (\ref{e:log improv}) as in the non-positive curvature case.

\subsection{Eigenfunctions on regular graphs}

Let $\mathcal G$ be a finite $q+1$-regular graph, and $\mathcal T_{q+1}$ its universal cover, the $q+1$ regular tree. We will denote by $|\mathcal G|$ the number of vertices of the graph. We are interested in the eigenfunctions of the operator $T_q$ defined on $L^2(\mathcal G)$ by
$$T_q f(x) = \frac1{\sqrt{q}} \sum_{d(x,y) = 1} f(y).$$
This operator is connected to the adjacency matrix $A$ and the Laplacian $\Delta$ of the graph by the formulas
$$ \Delta = \frac1{q+1} A  - \text{Id} = \frac{\sqrt{q}}{q+1} T_q - \text{Id}, $$
so the eigenfunctions of $T_q$ coincide with the eigenfunctions of $A$ and $\Delta$.

In order to state the condition we require on the graph $\mathcal G$, we define, for any function $f: \mathcal T_{q+1} \to \C$, the operator
$$ \tilde S_n(f)(x) = \frac1{q^{n/2}} \sum_{d(x,y) = n} f(y).$$
Up to normalization, this is the averaging operator over spheres of radius $n$. Its projection to the graph will be denoted by $S_n$. 
We assume that for any $\delta > 0$, we have
\begin{equation}\label{e:cycles}
\| S_n\|_{L^1\to L^\infty} \lesssim_\delta q^{(-1/2 + \delta)n}, \quad \text{for all } n \leq N.
\end{equation}
where $N$ will be taken as large as possible.  A natural assumption for us is that $N>c\log|\mathcal{G}|$ for some $c$ as $|\mathcal{G}|\to\infty$, as in this case Theorem~\ref{t:graph} gives a bound analogous to (\ref{e:log improv}).

\begin{remark}
Note that if we write
$$ \delta_x(y) = \begin{cases} 1 & \text{if } y = x \\
0 & \text{otherwise,}
\end{cases}$$
then $\|S_n \delta_x\|_\infty$ is controlled by the number of cycles of length $2n$ through $x$. Condition \eqref{e:cycles} is always satisfied if $N$ is the radius of injectivity $\InjRad(\mathcal G)$, or equivalently if $2N$ is equal to the girth, and in this case we can take $\delta = 0$.
As remarked in \cite{BL10}, if $\mathcal G$ is taken at random among the $q+1$-regular graphs with fixed number of vertices, then the condition is satisfied for $N = c \log | \mathcal G | $, for some constant $c > 0$, with a probability tending to $1$ as $|\mathcal G | \to \infty$.
\end{remark}

\begin{thm}\label{t:graph}
Let $q \geq 2$ and $\mathcal G$ be a $q+1$ regular graph satisfying \eqref{e:cycles} for some $N \geq \InjRad(\mathcal G)$, then for any eigenfunction $\psi$ of $T_q$ we have
$$ \| \psi \|_p  \lesssim_p \frac{\| \psi \|_2}{\sqrt{N}} , $$
for any $p > 2$. 
\end{thm}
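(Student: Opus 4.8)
The plan is to exploit the spectral theory of the tree operator $T_q$ together with the pointwise dispersive bound \eqref{e:cycles}. The starting point is the interpolation philosophy behind Sogge-type estimates: since $\|\psi\|_2$ is given and $\|\psi\|_\infty$ controls all intermediate $L^p$ norms via $\|\psi\|_p \le \|\psi\|_2^{2/p}\|\psi\|_\infty^{1-2/p}$, it suffices to prove the sup-norm bound $\|\psi\|_\infty \lesssim \|\psi\|_2/\sqrt{N}$ and then note that the exponent $\sigma$ for a graph (where the "dimension" is effectively $0$) should vanish, so the $L^p$ bound for every $p>2$ follows from the $L^\infty$ one by this trivial interpolation. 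Thus the whole problem reduces to the endpoint $p=\infty$.

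For the sup-norm bound I would build a spectral projector localized near the eigenvalue of $\psi$ out of the sphere operators $\tilde S_n$. Writing $\lambda = \frac{1}{\sqrt q}(e^{i\theta}q^{1/2}+e^{-i\theta}q^{-1/2})$ or more simply using the fact that on the tree the spherical functions $\varphi_\theta$ are the radial eigenfunctions of $T_q$, one has an identity of the form $T_q$-eigenfunctions are reproduced by a kernel $\sum_{n=0}^{N} a_n(\theta)\, \tilde S_n$ with suitably chosen coefficients $a_n$ — essentially a truncated version of the resolvent/spectral measure. Concretely, choose weights $a_n$ so that $\sum_{n=0}^N a_n \widetilde S_n \psi = \psi$ whenever $T_q\psi = 2\cos\theta\,\psi$ (this uses that the Chebyshev-like recursion for $\widetilde S_n$ closes on the eigenspace); the natural choice makes $a_n$ of size roughly $1$ (or at worst polynomially bounded, which is harmless after the $\sqrt N$ gain is in hand — but one must be a little careful and it is cleanest to take a Fejér-type average so that $|a_n|\lesssim 1$ uniformly). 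Then $\psi(x) = \frac1{?}\sum_{n=0}^{N} a_n (\widetilde S_n\psi)(x)$, and projecting to $\mathcal G$, $|\psi(x)| \le \sum_{n=0}^N |a_n|\, |(S_n\psi)(x)|$.

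Now estimate each term. By Cauchy–Schwarz, $|(S_n\psi)(x)| = |\langle S_n\delta_x,\psi\rangle| \le \|S_n\delta_x\|_2\,\|\psi\|_2$, and $\|S_n\delta_x\|_2^2 = \langle S_n^*S_n\delta_x,\delta_x\rangle \le \|S_n^*S_n\|_{L^1\to L^\infty}\|\delta_x\|_1^2 \lesssim \|S_{2n}\|_{L^1\to L^\infty}$ up to combining nearby spheres (since $S_n^*S_n$ is a combination of $S_m$ for $m\le 2n$), which by \eqref{e:cycles} is $\lesssim_\delta q^{(-1/2+\delta)\cdot 2n}$. Hence $\|S_n\delta_x\|_2 \lesssim_\delta q^{(-1/2+\delta)n}$, and since $a_n$ is bounded while the number of $y$ with $d(x,y)=n$ makes the "trivial" bound $\|S_n\delta_x\|_2 \le 1$ (it is a probability-type average after the $q^{-n/2}$ normalization), we actually always have $\|S_n\delta_x\|_2 \le 1$ for $n \le \InjRad(\mathcal G)$ with no loss. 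Summing, $|\psi(x)| \lesssim \big(\sum_{n=0}^N 1\big)^{1/2}\|\psi\|_2$ would only give $\sqrt N$ the wrong way; the point is that one must normalize the reproducing identity: the true identity is $\psi(x) = \frac{1}{c_N}\sum_{n=0}^N a_n(S_n\psi)(x)$ with $c_N \asymp N$ coming from $\|\sum a_n\varphi_\theta\|$-type normalization (the Plancherel/Fejér mass), so that $|\psi(x)| \le \frac{1}{c_N}\sum_{n=0}^N |a_n|\,\|S_n\delta_x\|_2\|\psi\|_2 \le \frac{1}{c_N}\cdot N\cdot 1\cdot \|\psi\|_2$ — still not enough. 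The actual gain comes from Cauchy–Schwarz in $n$ applied to the reproducing sum together with near-orthogonality of the $S_n\delta_x$: $\big|\sum_n a_n (S_n\psi)(x)\big| = \big|\langle \sum_n \bar a_n S_n^*\delta_x,\psi\rangle\big| \le \|\sum_n \bar a_n S_n^*\delta_x\|_2\|\psi\|_2$, and $\|\sum_n \bar a_n S_n^*\delta_x\|_2^2 = \sum_{n,m}a_n\bar a_m\langle S_n^*\delta_x,S_m^*\delta_x\rangle$; within the injectivity radius the spheres of distinct radii are disjoint so the cross terms vanish at the level of counting and $\langle S_n^*\delta_x,S_m^*\delta_x\rangle$ is controlled by $\|S_{n+m}\|_{L^1\to L^\infty}\lesssim q^{(-1/2+\delta)(n+m)}$, which is summable; so $\|\sum_n \bar a_n S_n^*\delta_x\|_2^2 \lesssim \sum_n |a_n|^2 + (\text{summable off-diagonal}) \lesssim N$ if $|a_n|\asymp 1$. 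Combined with $c_N \asymp N$ this yields $|\psi(x)| \lesssim \frac{\sqrt N}{N}\|\psi\|_2 = \|\psi\|_2/\sqrt N$, as desired. Then interpolate with $\|\psi\|_2$ to conclude for all $p>2$.

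The main obstacle is getting the reproducing/spectral identity and its normalization exactly right: one needs a family of coefficients $a_n = a_n(\theta)$ — built from the spherical functions $\varphi_\theta$ of the tree and a Fejér-type cutoff of length $N$ — for which (i) the operator $\frac1{c_N}\sum_{n\le N} a_n \widetilde S_n$ acts as the identity on the $\theta$-eigenspace of $T_q$, (ii) $|a_n|$ is bounded uniformly in $n$ and $\theta$, and (iii) the normalization constant $c_N$ is genuinely of order $N$. Points (i) and (iii) are the substance: they amount to the Plancherel theorem for the $(q+1)$-regular tree (the spherical functions decay like $n q^{-n/2}$, the Plancherel measure is explicit, and truncating the spectral expansion at radius $N$ produces an $L^2$-normalized bump of mass $\asymp N$ on the spectral side), and one must handle uniformly the tempered spectrum $\theta\in\R$ versus any exceptional/complementary-series eigenvalue $|\lambda|\le 2$ that the graph might have — though for $|\lambda|>2$ the bound is even easier since such eigenfunctions are more spread out, and for $\lambda$ near the spectral endpoints $\pm 2$ one uses the standard Fejér-kernel argument on $[-\pi,\pi]$. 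Once the identity is in place, the dispersive input \eqref{e:cycles} plugs in mechanically and the near-orthogonality is immediate below the injectivity radius, so all the remaining steps are routine.
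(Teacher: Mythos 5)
Your proposed reduction to $p=\infty$ is the fatal flaw. From $\|\psi\|_\infty\lesssim \|\psi\|_2/\sqrt N$ and the trivial $\|\psi\|_2\le\|\psi\|_2$, the function-level interpolation $\|\psi\|_p\le\|\psi\|_2^{2/p}\|\psi\|_\infty^{1-2/p}$ only yields $\|\psi\|_p\lesssim \|\psi\|_2\,N^{-(p-2)/(2p)}$, which is strictly weaker than the claimed $\|\psi\|_2/\sqrt N$ for every finite $p$ (the exponent $(p-2)/(2p)<1/2$, e.g.\ $1/4$ at $p=4$). The ``dimension $0$'' heuristic does not rescue this: the $\sqrt N$ gain is not a power of the frequency but a logarithmic-scale improvement, and it does not pass through Hölder between $L^2$ and $L^\infty$. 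The correct place to interpolate is at the \emph{operator} level: one interpolates $\|S_n\Pi\|_{L^1\to L^\infty}\lesssim q^{(-1/2+\delta)n}$ against $\|S_n\Pi\|_{L^2\to L^2}=O(n)$ to get $\|S_n\Pi\|_{L^{p'}\to L^p}\lesssim q^{-\beta_p n}$ for each $p>2$, and then runs the $TT^*$ argument for that $p$ to obtain $\|W_{N,\alpha}\|_{L^2\to L^p}=O(\sqrt N)$ directly. That is exactly what the paper does (Lemma~\ref{l:lplq}, Lemma~\ref{l:lplq bis}, Proposition~\ref{p:cluster graph}), and it gives the full-strength $1/\sqrt N$ for every $p>2$. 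Your Cauchy--Schwarz-in-$n$ computation for the sup norm is essentially the $p=\infty$ endpoint of the paper's $TT^*$ argument, so the estimate you do carry out is on the right track, but you cannot then retreat to interpolation to cover $2<p<\infty$.

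Two further issues. First, your reproducing identity $\frac1{c_N}\sum a_n\tilde S_n$ with $|a_n|\lesssim 1$ and $c_N\asymp N$ is left unconstructed; the spherical eigenvalues $q^{n/2}\phi_\theta(n)$ grow like $n$ on the tempered spectrum, which is why the paper instead builds $W_{N,\alpha}$ from Chebyshev polynomials $P_n(T_q/2)$ (the ``discrete wave propagator''), which act by $\cos(n\theta)$ and are uniformly bounded, with normalization supplied cleanly by Lemma~\ref{l:unitarity}. Second, you dismiss the untempered spectrum as ``even easier,'' but that is precisely where Condition~\eqref{e:cycles} earns its keep: the projection $\Pi$ (or $\Pi_\epsilon$) to the near-tempered part is not a local operator, and establishing $\|S_n\Pi_\epsilon\|_{L^1\to L^\infty}$ decay requires controlling the sup norm of arbitrary spans of untempered eigenfunctions (Lemma~\ref{sup norm decay} and Lemma~\ref{l1 l infty} in the paper), not just individual ones. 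Without this, the interpolation input $\|S_n\Pi_\epsilon\|_{L^1\to L^\infty}\lesssim q^{-n/4}$ used in the $TT^*$ step is unjustified. Both gaps are substantive; the first (function-level interpolation) cannot be repaired without essentially redoing the $TT^*$ argument for each $p$.
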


\begin{remark}
If $\mathcal G$ is a random regular graph of fixed degree and number of vertices, the theorem gives that for any $p > 2$,
$$ \| \psi \|_p \lesssim_p \frac{\| \psi \|_2}{\sqrt{\log |\mathcal G |}} , $$
almost surely when $|\mathcal G| \to \infty$. However in the case of random regular graphs with a large fixed degree, much stronger results are shown in \cite{BHY16}, namely
$$ \| \psi \|_\infty \lesssim \frac{(\log |\mathcal G|)^\delta}{\sqrt{|\mathcal G|}} \|\psi\|_2, $$
almost surely when $|\mathcal G| \to \infty$, for some $\delta > 0$.  On the other hand, our result holds for {\em all} graphs satisfying Condition (\ref{e:cycles}), whereas the stronger bounds obtained from probabilistic methods can only be shown to hold almost surely.
\end{remark}

There has been a growing interest recently in the study of eigenfunctions of the Laplacian on regular graphs seen both as independent combinatorial objects (deterministic \cite{BL10, ALM} and random \cite{BHY16}) and as graphs arising from symmetries on manifolds \cite{BL}. A first result of delocalization of eigenfunctions was given 
in \cite{BL10}. Although it is not explicitly derived in the paper, the proof gives a bound on the sup norm of eigenfunctions, but the uniform bound obtained directly from there is only
$$ \| \psi \|_\infty \lesssim \frac{\| \psi \|_2}{N^{1/4}}. $$
Our methods are similar to \cite{BL10} --- based on harmonic analysis on regular trees --- and combine an adaptation of the argument of \cite{BL10} to some ideas of \cite{HT}. They allow us to prove a better bound and generalize it to all $L^p$ norms for any $p >2$.

\subsection{Eigenfunctions on the sphere}\label{ss:sphere}
Eigenfunctions of the Laplacian on the round $2$-sphere lie at the opposite extreme from hyperbolic surfaces:  here the dynamics are stable, the spectrum has large multiplicities, the property of quantum ergodicity fails \cite{CdV}, and the Sogge $L^p$ bounds (\ref{e:Sogge}) are achieved \cite{Sog86,SoggeBook}.  On the other hand, the large multiplicities allow for different bases that exhibit different properties, and in joint work with E. Lindenstrauss \cite{BLML} we showed that quantum ergodicity {\em does} hold for  bases of joint eigenfunctions of the Laplacian and an averaging operator over a finite set of rotations (stronger results for random bases were already proved in \cite{ZelditchRandomSphere} and \cite{VanderKam}).  This followed from  a quantum ergodicity result on large regular graphs (first proved in \cite{ALM}), by essentially using the graph structure induced from the free subgroup of rotations, to transfer techniques from the graph setting to the sphere $\Sh^2$.
In this sense, our result on the sphere is in the same spirit as \cite{BLML}, as we take advantage of the additional symmetries to transfer our $L^p$ methods on graphs to get the Hassel-Tacy bounds (\ref{e:log improv}) on the sphere.  We remark that in order to get a logarithmic improvement, we will have to make some assumptions on the set of rotations, most notably that they not get too close to each other (in a very quantitative way); this condition is satisfied when the rotations are algebraic (see Lemma~\ref{l:algebraicity}).

So for $M \geq 2$, let $g_1,\ldots, g_M$ be a finite set of rotations in $\SO(3)$ with algebraic entries, that generate a free subgroup. We further assume that the stabilizer of any $x\in\Sh^2$ is either trivial or cyclic--- i.e., its intersection with words of length $n$ does not grow exponentially in $n$--- in order to conform with Condition (\ref{e:cycles}).  We define the operator $T_q$ acting on $L^2(\Sh^2)$ by
$$ T_q f(x) = \frac1{\sqrt{q}} \sum_{j=1}^M \left( f(g_j x) + f(g_j^{-1} x)\right), $$
with $q = 2M -1$.

\begin{thm}\label{t:sphere}
Let $\psi_s$ be an $L^2$-normalized eigenfunction of the positive Laplacian $\Delta$ on the sphere $\Sh^2$ 
$$ \Delta \psi_s = s(s+1) \psi_s$$
Assume that $\psi_s$ is also an eigenfunction of an averaging operator $T_q$ for a finitely-generated free algebraic subgroup of $SO(3)$ such that every non-trivial stabilizer of $x\in\Sh^2$ is cyclic.  Then we have for $p > 6$
$$ \|\psi_s \|_p \lesssim_p \frac{s^{\frac12 - \frac2{p}}}{\sqrt{\log s}}.$$
\end{thm}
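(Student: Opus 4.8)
The plan is to transplant the Hassel--Tacy argument to $\Sh^2$, with the $(q+1)$--regular tree attached to the free group of rotations playing the part that the Cartan--Hadamard universal cover plays in the non--positively curved case. Since every $g_j\in\SO(3)$ commutes with the rotation--invariant operator $\Delta$, so does $T_q$, and $\psi_s$ is a genuine joint eigenfunction, $\Delta\psi_s=s(s+1)\psi_s$ and $T_q\psi_s=\mu\psi_s$ for some $\mu$. Let $S_n$ be the averaging operator over words of length $n$ in the generators, $S_nf(x)=q^{-n/2}\sum_{|w|=n}f(wx)$ (the sum over reduced words $w$); because the group is free, exactly as on the tree the $S_n$ satisfy $S_1S_n=S_{n+1}+S_{n-1}$ for $n\ge2$ and $S_1^2=S_2+\tfrac{q+1}{q}S_0$, so $S_n=P_n(T_q)$ and $S_n\psi_s=\gamma_n(\mu)\psi_s$, where $\gamma_n$ is the spherical function value of the $(q+1)$--regular tree. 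Fix a small $c>0$, put $N=\lfloor c\log s\rfloor$, and form the reproducing operators of Theorem~\ref{t:graph} and of Sogge's theorem:
$$C=\frac1{c_N}\sum_{n=0}^N\gamma_n(\mu)S_n,\quad c_N=\sum_{n=0}^N\gamma_n(\mu)^2;\qquad B=\Phi\!\left(\sqrt\Delta-\sqrt{s(s+1)}\,\right),$$
with $\Phi$ even Schwartz, $\widehat\Phi\in C_c^\infty((-\varepsilon,\varepsilon))$, $\Phi(0)=1$. Then $C\psi_s=\psi_s$, $B\psi_s=\psi_s$; $B$ is self--adjoint with a radial kernel supported in $\{\dist\le\varepsilon\}$, by finite propagation speed and stationary phase $\|B\|_{L^2\to L^p}\lesssim s^{\sigma(2,p)}=s^{1/2-2/p}$ for $p\ge6$, and the same bounds, with $B^2$ and its kernel $\beta$ obeying $|\beta(r)|\lesssim s(1+sr)^{-1/2}$ on $\{\dist\le2\varepsilon\}$, hold for $B^2$. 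Since $\Delta$ and $T_q$ commute, so do $B$ and $C$; put $A=BC$, so $A=A^*$, $A\psi_s=\psi_s$, and $\|\psi_s\|_p\le\|A\|_{L^2\to L^p}$.

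Next I would run a $TT^*$ argument: $\|\psi_s\|_p^2\le\|A\|_{L^2\to L^p}^2=\|A^2\|_{L^{p'}\to L^p}=\|B^2C^2\|_{L^{p'}\to L^p}$. Expanding $C^2=c_N^{-2}\sum_{n,m\le N}\gamma_n(\mu)\gamma_m(\mu)S_nS_m$ and using the non--negative, uniformly bounded structure constants $a^{(n,m)}_k$ of $S_nS_m=\sum_k a^{(n,m)}_kS_k$ (with $a^{(n,m)}_0=\tfrac{q+1}{q}\delta_{nm}$), one gets $C^2=\sum_{k=0}^{2N}e_kS_k$ with $e_0=\tfrac{q+1}{q\,c_N}$ and $|e_k|\lesssim k/c_N$. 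The $k=0$ term of $B^2C^2$ is $e_0B^2$, contributing $e_0\|B^2\|_{L^{p'}\to L^p}=e_0\|B\|_{L^2\to L^p}^2\lesssim s^{2\sigma(2,p)}/c_N$. The remaining input for this term is the pointwise lower bound $c_N=\sum_{n\le N}\gamma_n(\mu)^2\gtrsim N$: for $\mu\in[-2,2]$ this follows from the explicit formula $\gamma_n(2\cos\theta)\asymp|c(\theta)|\cos(n\theta+\arg c(\theta))$ together with $\sum_{n\le N}\cos^2(n\theta+\cdot)\ge N/2-O(1/|\sin\theta|)$ (the degenerate cases $\theta\to0,\pi$ even giving $c_N\asymp N^3$), while if $\mu$ is not tempered then $c_N$ is exponentially large in $N$ and the final bound only improves. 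Thus the $k=0$ term is $\lesssim s^{2\sigma(2,p)}/\log s$, which upon taking square roots is precisely $\|\psi_s\|_p\lesssim_p s^{1/2-2/p}/\sqrt{\log s}$.

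The crux, and what I expect to be the main obstacle, is the off--diagonal bound $\bigl\|\sum_{k=1}^{2N}e_kB^2S_k\bigr\|_{L^{p'}\to L^p}\lesssim s^{2\sigma(2,p)}/\log s$. Reorganised by the word $w$, its kernel is $\sum_{1\le|w|\le2N}e_{|w|}\,q^{-|w|/2}\,\beta(\dist(wx,z))$ --- a superposition of the Sogge kernel $\beta$ precomposed with the rotations $w$ of length $\le2N$. The essential ingredient is Lemma~\ref{l:algebraicity}: for algebraic generators there are $\alpha,c_0>0$ with $\dist(wx,x)\ge c_0q^{-\alpha|w|}$ whenever $wx\ne x$, so choosing $c$ small enough that $q^{-2\alpha N}\gg s^{-1}$ makes the points $\{wx:|w|=k\}$ with $k\le2N$ at least $\gtrsim q^{-\alpha k}\gg 1/s$ apart. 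At the scale $1/s$ on which $\beta$ concentrates these translated kernels then barely overlap: at most one lies within $1/s$ of a given $z$, and at most $1+(rq^{\alpha k})^2$ lie within distance $r$; combining this with the prefactor $q^{-k/2}$, with $|e_k|\lesssim k/c_N$, and with $|\beta(r)|\lesssim s(1+sr)^{-1/2}$ should produce geometric decay of the $k$--th term once $\alpha$ (hence $c$) is small enough, so that the sum over $k$ is $\lesssim e_0\,s^{2\sigma(2,p)}\asymp s^{2\sigma(2,p)}/\log s$. The delicate point --- exactly as in Hassel--Tacy, and in the amplification method for sup--norms of Hecke eigenforms --- is that this must be done retaining the oscillatory structure of $\beta$: a crude triangle inequality over the $\sim q^k$ rotations, or interpolating the bound for $B^2S_k$ through $L^1\to L^\infty$, would cost a power $s^{2/p}$ and destroy the Sogge exponent, so the off--diagonal estimate has to be set up as a genuine $L^{p'}\to L^p$ (Stein--Tomas type) bound for a sum of rotated Sogge operators rather than a pointwise sum of their norms. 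This is where $p>6$ is used: $6=\tfrac{2(n+1)}{n-1}$ for $n=2$ is precisely the range in which Sogge's bound is saturated by point (zonal) rather than tube (Gaussian beam) concentration, which is the regime the Hassel--Tacy mechanism improves. Finally, the measure--zero set of $x$ with non--trivial stabilizer is handled by the cyclicity hypothesis: the number of words of length $n$ fixing such an $x$ then grows sub--exponentially, which is exactly Condition~\eqref{e:cycles} transported to $\Sh^2$, and it makes these orbits contribute an admissible error throughout.
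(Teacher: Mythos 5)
Your scaffolding matches the paper's: a spectral amplifier built from $S_n$'s (your $C=c_N^{-1}\sum\gamma_n(\mu)S_n$ is a cousin of the paper's $W_{N,\alpha,s}=\sum\cos(2n\alpha)P_{2n}(T_q/2)\chi_s$, and $c_N\gtrsim N$ is Lemma~\ref{l:unitarity}), the $TT^*$ reduction to an $L^{p'}\to L^p$ bound, a reproducing operator for $\mathcal{H}_s$ (your $B$ vs.\ the exact projection $\chi_s$ with kernel $Z_s$), and separation of orbit points via Lemma~\ref{l:algebraicity} with the cyclic-stabilizer hypothesis handling the exceptional fixed points. But the step you yourself flag as ``the crux'' --- the off-diagonal bound $\|\sum_{k\geq1}e_kB^2S_k\|_{L^{p'}\to L^p}\lesssim s^{2\sigma(2,p)}/\log s$ --- is not carried out, and the path you indicate for it (a Stein--Tomas-type estimate that retains the oscillations of $\beta$, on the grounds that any pointwise/Young's-inequality treatment of the $\sim q^k$ rotations would cost $s^{2/p}$) is not what is needed and is not how the paper proceeds. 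That leaves a genuine gap.

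The missing idea is a decomposition of the reproducing kernel into a concentrated piece and a tail, after which elementary bounds suffice. The paper writes $Z_s=Z_s^{(1)}+Z_s^{(2)}+Z_s^{(3)}$ with $Z_s^{(1)}$ supported in $\{t\leq s^{-1/4}\}$, $Z_s^{(3)}$ its mirror near $\pi$, and $Z_s^{(2)}$ the remainder, and then treats the two cases by different mechanisms. For $Z_s^{(1)}$: by Lemma~\ref{l:algebraicity} the points $\{gx:|g|=n\}$ are $\gtrsim s^{-1/4}$-separated (for $n\lesssim\log s$), so the translated kernels have essentially disjoint supports and
$\|\sum_{|g|=n}Z_s^{(1)}(g\cdot)\|_{p/2}\lesssim q^{2n/p}\|Z_s^{(1)}\|_{p/2}$
rather than $q^{n}\|Z_s^{(1)}\|_{p/2}$; with the $q^{-n/2}$ prefactor this yields $q^{-n(p-4)/(2p)}s^{1-4/p}$, already for all $p>4$. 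For $Z_s^{(2)}$: the Szeg\H{o} bound $|Z_s(t)|\lesssim\sqrt{s/t}$ gives $\|Z_s^{(2)}\|_{p/2}\lesssim s^{3/4-2/p}$, a gain of $s^{-(1/4-2/p)}$ over $\|Z_s\|_{p/2}\asymp s^{1-4/p}$, so here the crude triangle inequality over all $q^n$ words is affordable as long as $q^{n/2}\lesssim s^{\delta_p}$ with $\delta_p<1/4-2/p$, i.e.\ $p>8$. No oscillatory cancellation in the kernel is used --- only pointwise bounds and Young's inequality --- and the improvement to $p>6$ comes from substituting Sogge's stationary-phase refinement for the Young's step on the $Z_s^{(2)}$ term, not from a Stein--Tomas bound on the full sum of rotated kernels. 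Your point-counting sketch, applied to the undecomposed kernel, does run into exactly the loss you fear; the split is what avoids it.
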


Note that this matches the best-known bounds for surfaces of non-positive curvature (\ref{e:log improv}), proved by Hassell and Tacy \cite{HT}.

\begin{remark}
In order to keep the exposition concise and self-contained, we will give a simplified argument that proves Theorem~\ref{t:sphere} for $p>8$.  The more general result for $p>6$ is identical, but requires Sogge's more refined argument from \cite{Sog86,SoggeBook} in place of the ``baby" version we include that appeals directly to Young's inequality.  We will indicate where Sogge's estimate can be put into the argument to prove the full stated version of Theorem~\ref{t:sphere}.
\end{remark}

The paper is organized as follows. In Section \ref{ramanujan}, we prove Theorem \ref{t:graph} in the case of Ramanujan graphs. These are graphs satisfying an additional condition on the spectrum of the adjacency matrix (a maximal spectral gap), which simplifies the proof. This allows us to show the main ideas of the argument, in a case with minimal technical difficulties. Moreover, the result is stronger for these graphs, as we only require a weakened version of Condition \eqref{e:cycles}, and is therefore interesting in its own right. The general proof of Theorem \ref{t:graph}, without the Ramanujan assumption, is addressed in Section \ref{untempered}. We then prove Theorem \ref{t:sphere} in Section \ref{sphere} by combining the ideas of the previous sections with estimates on convolutions with zonal spherical harmonics.

\vspace{.2in}
{\bf Acknowledgements.}  We thank Melissa Tacy for helpful discussions, and in particular for patiently walking us through some of the relevant background for Section~\ref{sphere}.

\section{The case of Ramanujan graphs}\label{ramanujan}
We  parametrize the spectrum of $T_q$ by $\lambda = 2\cos{\theta_\lambda}$, with
$$\lambda \in \left[-2\cosh\left(\frac{\log q}{2}\right),2\cosh\left(\frac{\log q}{2}\right)\right].$$ It can be divided into two parts: the \emph{tempered spectrum} is the part contained in the interval $[-2,2]$, and the \emph{untempered spectrum} is the part lying outside this interval (corresponding to complex values of $\theta_\lambda$).

In order to outline the main argument of the proof, we first consider the case of Ramanujan graphs. These are graphs with an empty untempered spectrum (except for the eigenvalue 
$2\cosh\left(\frac{\log q}{2}\right)$ corresponding to the constant eigenfunctions). In this case we can weaken the girth assumption \eqref{e:cycles} and only ask that there exist 
$\beta > 0$ such that
\begin{equation}\label{e:cycles2}
\| S_n\|_{L^1\to L^\infty} \lesssim q^{-\beta n}, \quad \text{for all } n \leq N.
\end{equation}
as in \cite{BL10}. Note that although the famous construction of Ramanujan graphs of \cite{LPS88} gives graphs with large girth, automatically satisfying Condition \eqref{e:cycles2} for $N \gtrsim \log |\mathcal G|$, this is not a general property (see \cite{Glas03}) and we therefore need the condition.

We will consider the operator $S_n \Pi$, where $\Pi$ is the orthogonal projection on the tempered spectrum. In the case of Ramanujan graphs we have $\Pi f(x) = f(x) - \bar f$, where $\bar f = \frac1{|\mathcal G|} \sum_{y\in \mathcal G} f(y)$. In the proof of the general theorem, we will need to subtract terms involving other untempered eigenfunctions and control their norm, which requires the stronger assumption \eqref{e:cycles}.

Note that Theorem~\ref{t:graph} holds trivially for the constant function, since
$$\frac{\|1\|_p}{\|1\|_2}=\frac{1}{|\mathcal{G}|}\cdot|\mathcal{G}|^{1/p}=|\mathcal{G}|^{1/p-1/2}$$
(In fact, one easily disposes with all untempered eigenfunctions, see Lemma~\ref{sup norm decay}.)  Thus we may assume that $\psi$ is orthogonal to constants; i.e., that $\Pi\psi=\psi$.

\begin{lemma}\label{l:lplq}
 Assuming \eqref{e:cycles2}, we have the estimate
$$ \| S_n \Pi \|_{L^p \to L^{p'}} = q^{-\beta_p n}, \quad \text{for all } n \leq N,$$
for any conjugate exponents $1 \leq p < 2 < p' \leq \infty$, i.e. such that $\frac1p + \frac1{p'} =1$.
\end{lemma}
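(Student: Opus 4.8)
The plan is to interpolate between an $L^1 \to L^\infty$ bound and an $L^2 \to L^2$ bound for the operator $S_n \Pi$. Condition \eqref{e:cycles2} directly gives $\|S_n \Pi\|_{L^1 \to L^\infty} \lesssim \|S_n\|_{L^1\to L^\infty} \lesssim q^{-\beta n}$ for $n \le N$, once we note that subtracting the average (i.e.\ applying $\Pi$) does not hurt the $L^1\to L^\infty$ norm by more than a constant: indeed $\Pi S_n = S_n \Pi$ since $\Pi$ is a spectral projection for $T_q$ and $S_n$ is a polynomial in $T_q$, and $\|S_n \Pi \delta_x\|_\infty \le \|S_n\delta_x\|_\infty + |\overline{S_n\delta_x}| \le 2\|S_n\delta_x\|_\infty$. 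For the $L^2\to L^2$ bound, I would use the functional calculus: $S_n$ acts on an eigenfunction with eigenvalue $2\cos\theta_\lambda$ by the spherical function value, which on the tempered spectrum $\theta_\lambda \in \R$ is the Harish-Chandra/spherical function $\varphi_n(\theta_\lambda)$ for the tree, satisfying the classical bound $|\varphi_n(\theta)| \lesssim (1+n) q^{-n/2}$ uniformly for $\theta$ real (this is the standard decay estimate for spherical functions on $\mathcal T_{q+1}$). Since $\Pi$ projects onto exactly the part of the spectrum where $\theta_\lambda$ is real, $\|S_n \Pi\|_{L^2\to L^2} = \sup_{\lambda \text{ tempered}} |\varphi_n(\theta_\lambda)| \lesssim (1+n)q^{-n/2}$.

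With these two endpoint bounds in hand, the result follows from the Riesz--Thoren interpolation theorem. Writing $\frac1p = \frac{1-t}{1} + \frac{t}{2}$ and $\frac1{p'} = \frac{1-t}{\infty} + \frac{t}{2}$ for the interpolation parameter $t = 2/p' = 2(1-1/p) \in (0,1)$ corresponding to conjugate exponents $p<2<p'$, we get
$$\|S_n \Pi\|_{L^p \to L^{p'}} \le \|S_n\Pi\|_{L^1\to L^\infty}^{1-t}\,\|S_n\Pi\|_{L^2\to L^2}^{t} \lesssim \big(q^{-\beta n}\big)^{1-t}\big((1+n)q^{-n/2}\big)^{t}.$$
The polynomial factor $(1+n)^t$ can be absorbed by decreasing the exponent slightly, or one simply records the bound as $q^{-\beta_p n}$ with $\beta_p := \beta(1-t) + \tfrac12 t - o(1) > 0$ valid for $n \le N$ (the $o(1)$ hidden in the polynomial loss is harmless over the relevant range, or can be dropped entirely at the cost of shrinking $\beta_p$). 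This gives the claimed $\|S_n\Pi\|_{L^p\to L^{p'}} = q^{-\beta_p n}$.

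The only genuinely nontrivial input is the uniform spherical-function bound $|\varphi_n(\theta)| \lesssim (1+n)q^{-n/2}$ on the tempered spectrum; everything else is interpolation and bookkeeping. This estimate is classical harmonic analysis on regular trees (it is exactly the statement that the tempered spherical functions decay like the square root of the Plancherel density, up to the polynomial defect at the endpoints $\theta = 0, \pi$), and it is presumably the content the authors invoke from \cite{BL10}. A small point to be careful about: one should check that $\Pi$ indeed kills precisely the untempered eigenvalues and nothing else, so that the supremum defining $\|S_n\Pi\|_{L^2\to L^2}$ runs only over real $\theta_\lambda$; for Ramanujan graphs this is immediate since the only untempered eigenvalue is the trivial one $2\cosh(\tfrac{\log q}{2})$, already removed by subtracting the mean. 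I would also remark that the identity $\|S_n\Pi\|_{L^p\to L^{p'}} = q^{-\beta_p n}$ should of course be read as $\lesssim$ (the displayed ``$=$'' is shorthand for the definition of $\beta_p$), consistent with the statement of Lemma~\ref{l:lplq}.
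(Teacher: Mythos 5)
Your argument follows the paper's proof exactly: an $L^1\to L^\infty$ endpoint from \eqref{e:cycles2}, an $L^2\to L^2$ endpoint from the spherical function on the tempered spectrum, and Riesz--Thorin interpolation. Your treatment of the first endpoint (commuting $\Pi$ past $S_n$ and bounding the mean of $S_n\delta_x$ by its sup norm) is in fact marginally cleaner than the paper's, which instead estimates $\|S_n\bar f\|_\infty\lesssim q^{n/2}/|\mathcal G|$ and invokes $N<\log_q|\mathcal G|$. You are also right that the ``$=$'' in the statement should be read as ``$\lesssim$''.

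There is, however, a normalization slip at the $L^2\to L^2$ endpoint. With the paper's normalization $S_n=q^{-n/2}\sum_{d(x,y)=n}$, the eigenvalue of $S_n$ on an eigenfunction with spectral parameter $\theta$ is $q^{n/2}\phi_\theta(n)$, where $\phi_\theta$ is the Harish--Chandra-normalized spherical function of \eqref{e:spherical} with $\phi_\theta(0)=1$. The classical bound you quote, $|\phi_\theta(n)|\lesssim (1+n)q^{-n/2}$ for real $\theta$, is correct for $\phi_\theta$ itself, but it yields $\|S_n\Pi\|_{L^2\to L^2}=O(n)$, not $O\bigl((1+n)q^{-n/2}\bigr)$; the latter is false whenever the graph has tempered eigenvalues near $\pm2$ (take $\theta$ near $0$, where $q^{n/2}\phi_0(n)\sim n$). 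The interpolated exponent is therefore $\beta(1-t)$ minus the harmless polynomial loss, i.e.\ $\beta_p$ close to $\beta\bigl(\tfrac2p-1\bigr)$, rather than your $\beta(1-t)+\tfrac t2$. Since this is still positive, the lemma survives unchanged --- only your value of $\beta_p$ needs correcting, and the corrected value matches the paper's choice $\beta_p=\tfrac12\beta\bigl(\tfrac2p-1\bigr)$.
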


\begin{proof}
Note that for any function $f$ such that $\|f\|_1 = 1$
\begin{align*}
\| S_n \Pi f \|_\infty &= \| S_n f - S_n \bar{f} \|_\infty \\
&\leq \| S_n f\|_\infty + \|S_n \bar{f} \|_\infty  \\
&\lesssim \| S_n f\|_\infty + q^{n/2}  \bar{f} \\
&\lesssim q^{-\beta n} + \frac{q^{n/2}}{|\mathcal G|}.
\end{align*}
We deduce that \eqref{e:cycles2} is satisfied if we replace $S_n$ with $S_n \Pi$, since $N < \log_q | \mathcal G|$.

On the other hand, it is well known that $S_n$ can be expressed as a polynomial in $T_q$, namely $S_n = q^{n/2} F(T_q, n)$, where $F(2\cos \theta, n) = \phi_\theta(n)$ is the spherical function
 \begin{equation}\label{e:spherical}
   \phi_\theta(n)=q^{-n/2}\left(\frac2{q+1}\cos (n \theta) + \frac{q-1}{q+1}\frac{\sin (n+1)\theta}{\sin \theta} \right)
 \end{equation}
Equivalently, the eigenfunctions of $T_q$ and $S_n$ coincide and the spectrum of $S_n$ is the set $\left\{ q^{n/2} \phi_{\theta_j}(n), j=1, \ldots, |\mathcal G| \right\}$, 
where $\{ 2 \cos(\theta_j) \}_{j=1,\ldots, |\mathcal G|}$ is the spectrum of  $T_q$. 
The spectrum of $S_n \Pi$ is obtained purely from parameters $\theta$ in the tempered spectrum of $T_q$, i.e. $\theta \in \R$, and is therefore bounded by $n$. We have
$$ \| S_n \Pi \|_{L^2 \to L^2} = O(n).$$
By interpolation we obtain
$$ \| S_n \Pi \|_{L^p \to L^{p'}} \lesssim_p q^{-\beta_p n}, \quad \text{for all } n \leq N.$$
for any conjugate exponents $\frac1p + \frac1{p'} =1$ with $p>2$, by setting eg. $\beta_p = \frac12\beta(\frac2p - 1)$.
\end{proof}

We now let  $\lambda = \cos\alpha$, with $\alpha \in [0,\pi]$, be the eigenvalue of the eigenfunction $\psi$. The idea of the proof is to define an operator that localizes spectrally near the eigenvalue $\lambda$ and to bound its $L^2 \to L^p$ norm for $p >2$. The bound will be obtained by a $TT^*$ argument similar to what was used in \cite{SoggeBook, HT}.

We define for $N$ even
$$W_{N,\alpha} = \sum_{n=1}^{N/2}\cos(2n\alpha) P_{2n}(T_q/2) \Pi$$
where $P_n(\cos \theta) = \cos(n\theta)$ is the Chebyshev polynomial of the first kind. We can see $P_n(T_q/2)$ as a discrete wave propagator as suggested in \cite{BL10}. The operator $W_{N,\alpha}$ is then reminiscent of the spectral cluster operators usually considered in the study of $L^p$ norms of eigenfunctions (see in particular \cite{HT}).

The spectral localization, or at least the emphasis of the eigenvalue $\lambda$ is based on the following lemma from \cite{BLML}, whose proof we reproduce for the convenience of the reader. 
\begin{lemma}\label{l:unitarity}
 For every $\alpha \in [0,\pi]$, $N \geq 10$,
 $$ \sum_{n=1}^N \cos(n\alpha)^2 \geq 0.3 N$$ 
\end{lemma}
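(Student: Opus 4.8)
The plan is to reduce the sum $\sum_{n=1}^N \cos(n\alpha)^2$ to a quantity that can be controlled by elementary trigonometric identities, and then handle the edge cases ($\alpha$ near $0$ or $\pi$) separately. Using $\cos^2 t = \frac12(1+\cos 2t)$, I would write
\[
\sum_{n=1}^N \cos(n\alpha)^2 = \frac{N}{2} + \frac12\sum_{n=1}^N \cos(2n\alpha) = \frac{N}{2} + \frac12\,\Re\!\left(\sum_{n=1}^N e^{2in\alpha}\right).
\]
The remaining geometric-type sum $D_N(\alpha) := \sum_{n=1}^N \cos(2n\alpha)$ satisfies $|D_N(\alpha)| \le \min\{N, \frac{1}{|\sin\alpha|}\}$ by summing the geometric series (the closed form being $\frac{\sin(N\alpha)\cos((N+1)\alpha)}{\sin\alpha}$ or similar), so the claimed bound $\frac12 N + \frac12 D_N(\alpha) \ge 0.3 N$ reduces to showing $D_N(\alpha) \ge -0.4 N$.

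The main case is when $\alpha$ is bounded away from $0$ and $\pi$. Here the bound $|D_N(\alpha)| \le \frac{1}{|\sin\alpha|}$ is useless if $|\sin\alpha|$ is small, so I would split into two regimes. If $|\sin\alpha| \ge \frac{5}{2N}$ (say), then $|D_N(\alpha)| \le \frac{1}{|\sin\alpha|} \le \frac{2N}{5} = 0.4N$ and we are done. If $|\sin\alpha|$ is smaller than this, then $\alpha$ is within roughly $\frac{3}{N}$ of $0$ or $\pi$ (using $\alpha \in [0,\pi]$), and in that regime $\cos(2n\alpha) = 1 - O((n\alpha)^2)$ or $\cos(2n\alpha)=\cos(2n(\pi-\alpha))=1-O((n(\pi-\alpha))^2)$; since $n(\pi-\alpha)$ or $n\alpha$ stays bounded (by roughly $3$) for $n \le N$, each term $\cos(2n\alpha)$ is bounded below by some fixed negative constant like $\cos 6 > -1$, so $D_N(\alpha) \ge (\cos 6) N > -0.96 N$, which is too weak. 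To fix this I would instead estimate $D_N(\alpha)$ directly in this near-endpoint regime: writing $\beta = \alpha$ or $\pi - \alpha$ with $N\beta$ small, a more careful expansion gives $\sum_{n=1}^N \cos(2n\beta) = N - 2\beta^2 \sum n^2 + \cdots \ge N(1 - o(1))$ once $N\beta$ is genuinely small, which is far better than needed; the constants can be arranged so the threshold between the two regimes is consistent. Alternatively, and more cleanly, one can just note that for $\alpha \in [0, \pi/2]$ the summands $\cos^2(n\alpha)$ can be grouped in consecutive pairs or quadruples whose sum is bounded below by a constant times the block length — I would choose whichever bookkeeping makes the constant $0.3$ come out with room to spare, exploiting that $N \ge 10$ to absorb lower-order terms.

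The step I expect to be the genuine obstacle is getting the constant right uniformly in $\alpha$: the ``bulk'' estimate via $\frac{1}{|\sin\alpha|}$ and the ``endpoint'' estimate via Taylor expansion each work comfortably in their own range, but the ranges must overlap, and one must verify that at the crossover value of $\alpha$ both bounds simultaneously beat $0.4N$ (equivalently that the worst case of $D_N(\alpha)$ over all $\alpha$ is $\ge -0.4N$ for $N \ge 10$). I would pin down the crossover threshold explicitly (something like $|\sin \alpha| = 2/N$), check the two resulting inequalities, and if the constants are tight I would lower the threshold slightly and/or use the $N \ge 10$ hypothesis to kill the error terms — since the statement only claims $0.3N$ rather than the essentially optimal $\approx \frac12 N$, there should be ample slack to make this work without delicate optimization.
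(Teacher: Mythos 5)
Your reduction to bounding $D_N(\alpha)=\sum_{n=1}^N\cos(2n\alpha)$ from below by $-0.4N$ is fine, and your regime-splitting instinct is the right one, but as written the two estimates you propose do not meet, and the remedies you suggest do not close the gap. The bound $|D_N(\alpha)|\le 1/|\sin\alpha|$ handles only $\sin\alpha\ge 2.5/N$, while the second-order Taylor bound $\cos t\ge 1-t^2/2$ gives $\sum_{n=1}^N\cos^2(n\alpha)\ge N-\alpha^2\tfrac{N(N+1)(2N+1)}{6}$, which beats $0.3N$ only for $N\alpha\lesssim 1.4$. The window $1.4\lesssim N\alpha\lesssim 2.5$ is covered by neither: concretely, at $\alpha=2/N$ the Taylor lower bound is negative and the $1/\sin\alpha$ bound only yields about $0.25N$, yet this window scales like $\alpha\sim c/N$ and so persists for every $N$ --- the hypothesis $N\ge 10$ and ``lowering the threshold slightly'' cannot help, because the obstruction lives in the normalized variable $N\alpha$. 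Your fallback of grouping into pairs or quadruples is also false as stated: for small $\alpha$, a block of boundedly many consecutive $n$ with $n\alpha$ near $\pi/2$ has $\sum\cos^2(n\alpha)$ arbitrarily close to $0$, so no fixed block length gives a uniform positive constant. (Also, in your near-endpoint discussion the per-term bound is $\min_{0\le t\le 2N\alpha}\cos t=-1$, not $\cos 6$, since $2N\alpha$ can exceed $\pi$ there; but you discard that estimate anyway.)

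The missing idea is to use the Dirichlet-kernel form with its sign structure, which is exactly what the paper does: $\sum_{n=1}^N\cos^2(n\alpha)=\frac{2N-1}{4}+\frac{\sin((2N+1)\alpha)}{4\sin\alpha}$, and (reducing to $\alpha\in[0,\pi/2]$ by symmetry) the oscillatory term is \emph{nonnegative} for $0<\alpha\le\pi/(2N+1)$, while for $\alpha\ge\pi/(2N+1)$ one has $\sin\alpha\ge\sin(\pi/(2N+1))$, so
\begin{equation*}
\frac1N\sum_{n=1}^N\cos^2(n\alpha)\;\ge\;\frac{1}{4N}\Bigl(2N-1-\frac{1}{\sin(\pi/(2N+1))}\Bigr)\;\ge\;0.3\quad(N\ge10).
\end{equation*}
Placing the threshold at the first zero of the numerator makes the two regimes overlap automatically and needs no Taylor expansion at all. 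Alternatively, within your framework the factor-two-better bound $D_N(\alpha)=\frac{\sin((2N+1)\alpha)}{2\sin\alpha}-\frac12\ge-\frac{1}{2\sin\alpha}-\frac12$, combined with a sharper small-$\alpha$ estimate, can be made to work, but this requires exactly the careful constant-chasing you were hoping to avoid; with the tools as you stated them the proof is incomplete.
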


\begin{proof}
We write
$$ S_T := \sum_{n=1}^T \cos(n\alpha)^2. $$
Then
\begin{align*}
\frac1T S_T &= \frac1T \sum_{n=1}^T \left( \frac12 + \frac14 e^{2in\alpha} + \frac14 e^{-2in\alpha} \right) \\
&= \frac{2T - 1}{4T} + \frac1{4T} \sum_{n=-T}^T e^{2in\alpha} \\
&= \frac{2T - 1}{4T} + \frac{\sin[(2T+1)\alpha]}{4T \sin\alpha}.
\end{align*}
Now 
$$ \min_{\alpha \in [0,\pi]} \frac{\sin[(2T+1)\alpha]}{4T \sin\alpha} = \min_{\alpha \in [0,\pi/2]} \frac{\sin[(2T+1)\alpha]}{4T \sin\alpha} \geq -\frac1{4T\sin(\pi/(2T+1))},$$
since $|\sin\alpha|$ is monotone increasing in $[0,\pi/2]$ and $\sin(2T+1)\alpha > 0$ for $0 < \alpha < \pi/(2T+1).$

Hence $$ \frac1T S_T \geq \frac1{4T} \left( 2T-1 - \frac1{\sin(\pi/(2T+1))} \right).$$
For $T\geq 10$ the right-hand side of the inequality is $\geq 0.3$  .
\end{proof}

Using Lemma \ref{l:unitarity} and the fact that $\Pi\psi=\psi$, we get that
\begin{align*}
W_{N,\alpha} \psi &=  \sum_{n=1}^{N/2} \cos(2n \alpha) P_{2n}(\cos \alpha) \psi \\
& = \sum_{n=1}^{N/2}  \cos^2(2n \alpha) \psi
\end{align*}
and therefore
$$\|W_{N,\alpha}\psi\| \geq 0.3 N \, \|\psi\|.$$

Now we can write
$$N \| \psi \|_p \lesssim \| W_{N,\alpha} \psi \|_p \leq \| W_{N,\alpha} \|_{L^2\to L^p} \| \psi \|_2.$$
Hence the proof of Theorem \ref{t:graph} is reduced to the following proposition.
\begin{prop}\label{p:cluster graph}
We have
$$\| W_{N,\alpha} \|_{L^2 \to L^p} = O(\sqrt{N}),$$
for any $ 2 < p \leq \infty$.
\end{prop}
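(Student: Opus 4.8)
The plan is to run a $TT^*$ argument on $W_{N,\alpha}$. Write $W_{N,\alpha} = \sum_{n=1}^{N/2} \cos(2n\alpha) P_{2n}(T_q/2)\Pi$. Bounding $\|W_{N,\alpha}\|_{L^2\to L^p}$ is equivalent to bounding $\|W_{N,\alpha} W_{N,\alpha}^*\|_{L^{p'}\to L^p}$, where $p'$ is the conjugate exponent, and in fact (by self-adjointness of each $P_{2n}(T_q/2)\Pi$) $W_{N,\alpha}^* = W_{N,\alpha}$, so I need $\|W_{N,\alpha}^2\|_{L^{p'}\to L^p} = O(N)$. Expanding the square gives a double sum over $n,m$ of terms $\cos(2n\alpha)\cos(2m\alpha)\,P_{2n}(T_q/2)P_{2m}(T_q/2)\Pi$. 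Using the product-to-sum formula for Chebyshev polynomials, $P_a P_b = \tfrac12(P_{a+b} + P_{|a-b|})$, each product $P_{2n}(T_q/2)P_{2m}(T_q/2)$ becomes a sum of two Chebyshev polynomials $P_{2n+2m}(T_q/2)$ and $P_{2|n-m|}(T_q/2)$ of even degree at most $2N$.

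The next step is to re-express each $P_{2k}(T_q/2)\Pi$ in terms of the sphere-averaging operators $S_j$. Since $S_j = q^{j/2} F(T_q,j)$ with $F(2\cos\theta,j) = \phi_\theta(j)$ the spherical function from \eqref{e:spherical}, and since $\{\phi_\theta(j)\}_j$ and $\{\cos(k\theta)\}_k$ are two bases for (even/all) trigonometric polynomials, one can write $P_{2k}(\cos\theta) = \cos(2k\theta)$ as a bounded linear combination $\sum_{j\le 2k} c_{k,j}\,\phi_\theta(j)$ — concretely, from the explicit formula for $\phi_\theta$ one recovers $\cos(k\theta)$ and $\frac{\sin(k+1)\theta}{\sin\theta}$ in terms of the $\phi_\theta(j)$ with coefficients of size $O(q^{-j/2})$ relative to $q^{j/2}$, so that $P_{2k}(T_q/2)\Pi = \sum_{j\le 2k} c_{k,j} q^{-j/2} S_j\Pi$ with $\sum_j |c_{k,j}| q^{-j/2}\cdot q^{j/2} = O(k) = O(N)$ after accounting for the telescoping/Fejér-type structure. (This is the discrete analogue of writing a spectral cluster as an average of wave propagators, cf. \cite{BL10,HT}.) Each $S_j\Pi$ with $j\le 2N\le 2\InjRad(\mathcal G)$ — or more precisely $j \le N$ after the parity bookkeeping, which one arranges by taking $N$ itself to be (at most) the injectivity radius in the reduction — is then controlled by Lemma~\ref{l:lplq}: $\|S_j\Pi\|_{L^{p'}\to L^p} \lesssim_p q^{-\beta_p j}$ for $j\le N$. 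Summing the geometric series in $j$ against the $O(N)$-sized coefficients yields $\|P_{2k}(T_q/2)\Pi\|_{L^{p'}\to L^p} = O(N)$ uniformly in $k\le N$, and then the double sum over $n,m$ — which has $O(N^2)$ terms but whose $\cos(2n\alpha)\cos(2m\alpha)$ weights must be handled carefully — needs a further gain.

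The main obstacle is precisely extracting the cancellation in the oscillatory coefficients $\cos(2n\alpha)\cos(2m\alpha)$: a naive triangle inequality over the $O(N^2)$ terms loses a factor of $N$ and gives only $O(N^2)$ instead of the required $O(N)$ for $W^2$ (equivalently $O(\sqrt N)$ for $W$). The fix is to interchange the order of summation: write $W_{N,\alpha}^2 = \sum_{k=0}^{N} b_k(\alpha)\, P_{2k}(T_q/2)\Pi$ where, after collecting the $P_{2n+2m}$ and $P_{2|n-m|}$ contributions, $b_k(\alpha) = \sum_{n,m:\, n\pm m = \pm k}\cos(2n\alpha)\cos(2m\alpha)$ is itself a bounded trigonometric sum; using product-to-sum on the cosines too, $b_k(\alpha)$ is a sum of Dirichlet/Fejér kernels and satisfies $\sum_{k} |b_k(\alpha)| = O(N)$ for every $\alpha$ (this is where the $\sqrt N$, rather than $N^0$, shows up — morally $\|W\|_{2\to 2}=O(N)$ and the off-diagonal gain from Lemma~\ref{l:lplq} upgrades one factor). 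Combining $\sum_k|b_k(\alpha)| = O(N)$ with the uniform bound $\|P_{2k}(T_q/2)\Pi\|_{L^{p'}\to L^p}=O(1)$ for the high-frequency pieces — re-examining the previous paragraph, the $O(N)$ there was wasteful; for $j$ near $N$ the weight $q^{-\beta_p j}$ is exponentially small, so in fact $\|P_{2k}(T_q/2)\Pi\|_{L^{p'}\to L^p} = O(1)$ once we split off the bounded $j=O(1)$ contribution which contributes the $L^2\to L^2$ part — gives $\|W_{N,\alpha}^2\|_{L^{p'}\to L^p} = O(N)$, hence $\|W_{N,\alpha}\|_{L^2\to L^p} = O(\sqrt N)$, which is Proposition~\ref{p:cluster graph}. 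I expect the bookkeeping in this last step — reconciling the $L^2\to L^2$ contribution (size $N$, no decay, from small $j$) with the genuinely smoothing contribution (exponential decay from Lemma~\ref{l:lplq}) so that exactly one factor of $N$ survives — to be the delicate point, and to be exactly where the $TT^*$ structure and the parity choice $n\mapsto 2n$ in the definition of $W_{N,\alpha}$ are used.
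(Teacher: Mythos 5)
There is a genuine gap. The $TT^*$ framework and the Chebyshev product-to-sum step match the paper, but you fundamentally misidentify where the gain comes from, and the resulting argument does not close.

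The key ingredient in the paper's proof is Lemma~\ref{l:lplq bis}, which gives \emph{exponential decay in the degree}: $\|P_m(T_q/2)\Pi\|_{L^{p'}\to L^p} \lesssim q^{-\beta m}$ for $m\le N$. You instead try to re-derive this from scratch by expanding $P_{2k}$ in the $S_j$'s, and along the way first claim the uniform bound $O(N)$ and then correct this downward to $O(1)$ --- but neither is what the lemma gives, and the exponential decay in $m$ is precisely what you are throwing away. With the correct bound, the paper simply applies the triangle inequality, bounding $|\cos(n\alpha)\cos(k\alpha)|\le 1$, and the double sum over $n,k\le N$ of $\tfrac12(q^{-\beta(n+k)} + q^{-\beta|n-k|})$ is $O(N)$ because the second piece concentrates on the diagonal $n=k$, where there are only $N$ terms. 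No oscillatory cancellation in the cosine weights is used, or needed.

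Your alternative route --- regrouping $W_{N,\alpha}^2 = \sum_k b_k(\alpha)P_{2k}(T_q/2)\Pi$ and asserting $\sum_k|b_k(\alpha)|=O(N)$ --- fails. The coefficients $b_k(\alpha)$ include, from the $|n-m|=k$ diagonals, a term of size $\approx \tfrac{N}{2}\cos(2k\alpha)$, so for $\alpha$ near $0$ or $\pi$ one has $|b_k(\alpha)|\gtrsim N$ for $O(N)$ values of $k$, and $\sum_k|b_k(\alpha)|$ is in fact $O(N^2)$, not $O(N)$. (Even for generic $\alpha$ the Dirichlet/Fej\'er cancellation you invoke only controls one of the two cosine pieces in each $b_k$.) Combined with your uniform $O(1)$ bound on $\|P_{2k}(T_q/2)\Pi\|_{L^{p'}\to L^p}$, this yields $\|W_{N,\alpha}^2\|_{L^{p'}\to L^p}=O(N^2)$, i.e.\ only $\|W_{N,\alpha}\|_{L^2\to L^p}=O(N)$, which is useless after the normalization by $N$. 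The fix is not cleverer bookkeeping of the $b_k(\alpha)$: it is to use $\|P_{2k}(T_q/2)\Pi\|_{L^{p'}\to L^p}\lesssim q^{-\beta k}$ so that the sum over degrees is effectively supported on $k=O(1)$, at which point $\sum_k|b_k(\alpha)|q^{-\beta k}=O(N)$ follows trivially because each individual $|b_k(\alpha)|$ is $O(N)$.
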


To prove Proposition \ref{p:cluster graph} we first recall the following lemma as stated in \cite{BL10}.
\begin{lemma}\label{estimate} 
Fix a point $o \in \mathcal T_{q+1}$ and write $|x| = d(o,x)$ for any $x \in \mathcal T_{q+1}$.
Let $\delta_o$ be the $\delta$-function supported at $o$, and $n$ a positive even integer.  Then
\begin{align*}
P_n(T_q/2)\delta_o(x) & =  \left\{ \begin{array}{ccc} 0  & \quad & |x|  \text{ odd } \quad \text{or} \quad |x|>n\\ \frac{1-q}{2q^{n/2}} & \quad & |x|<n \quad \text{and} \quad |x| \text{ even } \\ \frac{1}{2q^{n/2}} & \quad & |x| = n \end{array}\right.
\end{align*}
\end{lemma}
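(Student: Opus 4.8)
The plan is to unwind the defining three–term recursion of the Chebyshev polynomials. Write $u_n:=P_n(T_q/2)\delta_o$; from $P_0=1$, $P_1(y)=y$ and $P_{n+1}(y)=2yP_n(y)-P_{n-1}(y)$ we get $u_0=\delta_o$, $u_1=\tfrac12 T_q\delta_o$ and $u_{n+1}=T_qu_n-u_{n-1}$ for $n\ge 1$. The basic structural observations are: (i) every $u_n$ is radial about $o$, and a radial function with profile $\tilde f(|x|)$ is sent by $T_q$ to the radial function equal to $\tfrac{q+1}{\sqrt q}\tilde f(1)$ at $o$ and to $\tfrac1{\sqrt q}\bigl(\tilde f(|x|-1)+q\,\tilde f(|x|+1)\bigr)$ at $|x|\ge 1$, simply because in the tree a vertex at distance $r\ge 1$ from $o$ has exactly one neighbour at distance $r-1$ and $q$ neighbours at distance $r+1$, while $o$ has $q+1$ neighbours all at distance $1$; and (ii) since $P_n$ has degree $n$ and the same parity as $n$, $u_n$ is a linear combination of the functions $T_q^k\delta_o$ with $k\le n$ and $k\equiv n\ (\mathrm{mod}\ 2)$, each of which is supported on the shells of radius $k,k-2,\dots$. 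Fact (ii) already gives $u_n(x)=0$ whenever $|x|>n$ or $|x|\not\equiv n\ (\mathrm{mod}\ 2)$, i.e. the first line of the claimed formula, with no computation.

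For the values I would prove, by induction on $n\ge 1$, the sharper statement that $u_n$ equals $a_n:=\tfrac1{2q^{n/2}}$ on the sphere $\{|x|=n\}$ and $b_n:=\tfrac{1-q}{2q^{n/2}}$ on every smaller sphere of the correct parity; for even $n$ this is exactly the lemma. The cases $n=1$ and $n=2$ are checked directly from $u_1=\tfrac12 T_q\delta_o$ and $u_2=T_qu_1-\delta_o$ using the profile action of $T_q$ from (i); note that $n=2$ is the only step in which the exceptional term $u_0=\delta_o$ (which does not fit the $a_0$–pattern since $\tfrac1{2q^0}\ne 1$) appears, so the induction proper runs for $n\ge 2$ with both $u_{n-1}$ and $u_n$ already of the stated form. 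In the inductive step one evaluates $u_{n+1}=T_qu_n-u_{n-1}$ shell by shell, in three regimes: on the outer shell $|x|=n+1$ only the inward neighbour contributes, giving $u_{n+1}=\tfrac1{\sqrt q}a_n=a_{n+1}$; on the shell $|x|=n-1$ one gets $\tfrac1{\sqrt q}(b_n+qa_n)-a_{n-1}$, where the bracket collapses to $\tfrac1{2q^{n/2}}$ and the subtraction yields $b_{n+1}$; and on the inner shells $0\le |x|\le n-3$ (of the right parity) one gets $\tfrac{q+1}{\sqrt q}b_n-b_{n-1}$, which also simplifies to $b_{n+1}$ — here one checks that the shell $|x|=0$, with its $q+1$ neighbours at distance $1$, produces the same value $\tfrac{q+1}{\sqrt q}b_n$ as a generic inner shell (where it is $\tfrac1{\sqrt q}(b_n+qb_n)$), so no extra case is needed.

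The only genuine work is bookkeeping: tracking parities so that precisely the right shells of $u_n$ and $u_{n-1}$ are active, and verifying that the boundary shells ($|x|=0$, $|x|=n\pm 1$, and the small-$n$ situations where some shell ranges are empty) behave consistently. There is no conceptual obstacle — the lemma amounts to finite propagation speed for the discrete wave propagator $P_n(T_q/2)$ together with an exact computation of its kernel — so once the profile action of $T_q$ and the two base cases are in place the induction goes through routinely. One could instead expand $P_n(\cdot/2)$ in the basis of sphere–averaging operators via the explicit spherical function \eqref{e:spherical} and read off the kernel from $\tilde S_k\delta_o$, but that requires sorting out the normalisations of the spherical averages against $\phi_\theta$ and does not seem shorter than the direct recursion.
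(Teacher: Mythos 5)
Your proof is correct: the paper itself states this lemma without proof (quoting \cite{BL10}), and your induction on the Chebyshev recursion $u_{n+1}=T_qu_n-u_{n-1}$, combined with the radial action of $T_q$ on the tree, is exactly the standard computation behind it; I checked the outer shell, the shell $|x|=n-1$, the generic inner shells, and the shell $|x|=0$, and all four give $a_{n+1}$ resp.\ $b_{n+1}$ as you claim. Your instinct to avoid the expansion of $P_n(T_q/2)$ in the operators $S_k$ is also sound, since with the normalisation $\tilde S_k\delta_o(x)=q^{-k/2}\mathbf 1_{|x|=k}$ used in this paper that expansion requires care to reconcile with the stated kernel values, whereas your direct recursion is self-contained and unambiguous.
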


We then have the estimate (see Corollary 1 in \cite{BL10})
\begin{lemma}\label{l:lplq bis}
Let $N$ be as in Condition \eqref{e:cycles}. For any conjugate exponents $1 \leq p < 2 < p' \leq \infty$ there exists $\beta > 0$ such that
$$\| P_n(T_q/2) \Pi \|_{L^p \to L^{p'}} \lesssim q^{-\beta n},$$
for any $n \leq N$.
\end{lemma}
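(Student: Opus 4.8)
The plan is to reduce the lemma to the $L^p\to L^{p'}$ decay of the sphere operators $S_m$ (in the Ramanujan case this is Lemma~\ref{l:lplq}). The first step is an exact formula expressing the Chebyshev propagator $P_n(T_q/2)$ as a linear combination of the $S_m$ with $m\le n$. Working on the universal cover $\mathcal T_{q+1}$, where $\tilde S_m\delta_o(x)=q^{-m/2}\mathbf 1_{\{|x|=m\}}$, Lemma~\ref{estimate} (stated for even $n$; the odd case is analogous) rewrites as the identity of functions
$$P_n(T_q/2)\delta_o=\tfrac12\,\tilde S_n\delta_o+\frac{1-q}{2}\sum_{\substack{0\le m<n\\ m\equiv n\,(2)}}q^{(m-n)/2}\,\tilde S_m\delta_o .$$
Since both sides are polynomials in $T_q$ applied to $\delta_o$ and the vectors $T_q^k\delta_o$ are linearly independent on the tree, this is an identity of polynomials in $T_q$; as $S_m=q^{m/2}F(T_q,m)$ is given by the same polynomial on $\mathcal G$, it descends to
$$P_n(T_q/2)=\tfrac12\,S_n+\frac{1-q}{2}\sum_{\substack{0\le m<n\\ m\equiv n\,(2)}}q^{(m-n)/2}\,S_m \qquad\text{on }L^2(\mathcal G).$$

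Composing with $\Pi$ and applying the triangle inequality, this gives
$$\|P_n(T_q/2)\Pi\|_{L^p\to L^{p'}}\ \lesssim\ \|S_n\Pi\|_{L^p\to L^{p'}}+\sum_{0\le m<n}q^{(m-n)/2}\,\|S_m\Pi\|_{L^p\to L^{p'}} .$$
Granting that $\|S_m\Pi\|_{L^p\to L^{p'}}\lesssim_p q^{-\beta_p m}$ for all $m\le N$ and some $\beta_p>0$, the right-hand side is the geometric sum $q^{-n/2}\sum_{0\le m\le n}q^{(1/2-\beta_p)m}$, which is $\lesssim q^{-\beta n}$ for $\beta=\min(\beta_p,\tfrac12)$ (shrinking $\beta$ slightly if $\beta_p=\tfrac12$, to absorb the factor $n+1$): the sum is dominated by its $m=n$ term if $\beta_p<\tfrac12$ and by its $m=0$ term otherwise. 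This finishes the lemma modulo the decay of $S_m\Pi$.

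For the decay of $S_m\Pi$ I would interpolate two bounds. On one hand $\|S_m\Pi\|_{L^2\to L^2}=O(m)$, since the tempered part of the spectrum of $S_m$ consists of the values $q^{m/2}\phi_\theta(m)$ with $\theta\in\R$, which are $O(m)$ by \eqref{e:spherical}. On the other hand one needs $\|S_m\Pi\|_{L^1\to L^\infty}\lesssim_\delta q^{(-1/2+\delta)m}$ for $m\le N$; Riesz--Thorin interpolation of these two then yields $\|S_m\Pi\|_{L^p\to L^{p'}}\lesssim_p q^{-\beta_p m}$ for a suitable $\beta_p>0$. I expect the $L^1\to L^\infty$ bound to be the main obstacle in the general (non-Ramanujan) setting: $\Pi$ must there strip off all the untempered eigenfunctions, whose $S_m$-eigenvalues can be as large as $\sim q^{m/2}$, so their sup norms have to be controlled very precisely --- this is exactly where the full strength of Condition~\eqref{e:cycles} (rather than the weaker \eqref{e:cycles2}) is used, cf.\ Lemma~\ref{sup norm decay}. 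In the Ramanujan case the difficulty disappears: $\Pi$ only subtracts the constant function and, since $N<\log_q|\mathcal G|$, one has $\|S_m\bar f\|_\infty\lesssim q^{m/2}|\mathcal G|^{-1}\|f\|_1\le q^{-m/2}\|f\|_1$, so the $L^1\to L^\infty$ bound follows at once from \eqref{e:cycles2} --- this is precisely Lemma~\ref{l:lplq}.
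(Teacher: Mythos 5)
Your proof is correct and follows essentially the same route as the paper: expand $P_n(T_q/2)$ via Lemma~\ref{estimate} as a linear combination of the sphere operators $S_m$, then invoke the $L^p\to L^{p'}$ decay of $S_m\Pi$ (Lemma~\ref{l:lplq} in the Ramanujan case, its analogue from Condition~\eqref{e:cycles} in general) and sum the resulting geometric series. Your version of the expansion, with the factor $q^{(m-n)/2}$ in front of $S_m$, is in fact the correct one for the normalization $\tilde S_m = q^{-m/2}\sum_{d(x,y)=m}$ used in the paper, whose displayed formula omits this factor.
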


\begin{proof}
The proof is identical to Corollary 1 in \cite{BL10} and based on the fact that we can write
$$ P_n(T_q/2) = \sum_{k=0}^{n/2-1} \frac{1-q}{2q^{n/2}} S_{2k} + \frac1{2q^{n/2}} S_n.$$
We then use the estimate of Lemma \ref{l:lplq}
\end{proof}

We are now ready to prove the proposition.

\begin{proof}[Proof of Proposition \ref{p:cluster graph}]
We use a $TT^*$ argument, or in other words the fact that
\begin{equation}\label{e:TTstar}
\| T \|_{L^2 \to L^p}^2 = \| T^*\|_{L^{p'} \to L^2}^2 = \| T T^* \|_{L^{p'}\to L^p}
\end{equation}
for conjugate exponents $\frac1p + \frac1{p'} = 1$.

First note that
$$W_{N,\alpha} W_{N,\alpha}^* = \sum_{n=1}^N \sum_{k=1}^N \cos(n\alpha) \cos(k\alpha) P_n(T_q/2) P_k(T_q/2)\Pi$$
Now we use Lemma \ref{l:lplq bis} in addition to the formula
$$ P_n P_k = \frac12( P_{n+k} + P_{|n-k|} )$$
satisfied by the Chebyshev polynomials, to get
\begin{align*}
\| W_{N,\alpha} W_{N,\alpha}^* \|_{L^{p'}\to L^p} &\leq \sum_{n=1}^N \sum_{k=1}^N \| P_n(T_q/2) P_k(T_q/2) \Pi \|_{L^{p'}\to L^p} \\
&= \frac12 \sum_{n=1}^N \sum_{k=1}^N \left( \| P_{n+k}(T_q/2) \Pi \|_{L^{p'}\to L^p} +  \|P_{|n-k|} (T_q/2) \Pi\|_{L^{p'}\to L^p} \right) \\
& \leq \frac12 \sum_{n=1}^N \sum_{k=1}^N q^{-\beta n} q^{-\beta k} + \frac12 \sum_{n=1}^N \sum_{k=1}^N q^{-\beta |n-k|} \\
&\leq O(1) + \sum_{n=1}^N \sum_{k=n}^N q^{-\beta(k-n)} = O(N)
\end{align*}
By \eqref{e:TTstar}, we deduce
$$\| W_{N,\alpha} \|_{L^2 \to L^p} = O(\sqrt{N}).$$
\end{proof}

Using Proposition \ref{p:cluster graph}, we finally obtain for any $2 < p \leq \infty$
$$ \| \psi \|_p \lesssim_p \frac1{\sqrt{N}} \| \psi \|_2.$$

\section{Proof of Theorem \ref{t:graph}}\label{untempered}
The proof for the general case is similar to the case of Ramanujan graphs. Here we need the stronger assumption \eqref{e:cycles}. Denote by $K_k(x,y)$ the kernel of the operator $S_k$. We can then rewrite \eqref{e:cycles} as
\begin{equation*}
\sup_{x,y} |K_k(x,y)| \lesssim_\delta q^{(-1/2+\delta)k}
\end{equation*}
for all $k\leq N$.
We denote $\Pi_\epsilon$ to be the projection to the part of the spectrum in $[-2-\epsilon, 2+\epsilon]$, that is in an $\epsilon$-neighborhood of the tempered spectrum.  
The goal is to bound $\|S_n\Pi_\epsilon\|_{L^1\to L^\infty}$ to prove the analogue of Lemma \ref{l:lplq}; Theorem~\ref{t:graph} holds trivially for untempered $\psi$ as we will see below in Lemma~\ref{sup norm decay}, like the case of the constant function in section~\ref{ramanujan} above.  In particular, we may safely assume for every $\epsilon>0$ that $\Pi_\epsilon\psi=\psi$.

More precisely, assumption (\ref{e:cycles}) means that the sup-norm of an untempered eigenfunction decays exponentially with $N$.  In fact--- and this is the key point--- any function in the {\em span} of untempered eigenfunctions will also have a sup-norm that decays exponentially:
\begin{lemma}\label{sup norm decay}
Let 
$$f(x) = \sum_j \phi_j(x)$$
where the $\{\phi_j\}$ are mutually orthogonal untempered eigenfunctions, each of eigenvalue $>2+\epsilon$ in absolute value; i.e., $\Pi_\epsilon f =0$.  Then there exists $\delta(\epsilon)>0$ such that
$$\|f\|_\infty \lesssim_\epsilon q^{-\delta(\epsilon) k} \|f\|_2$$
for every $k\leq N$ satisfying (\ref{e:cycles}).
\end{lemma}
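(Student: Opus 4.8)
The plan is to invert $S_k$ on the untempered subspace, where it is well conditioned. Recall $S_k=q^{k/2}F(T_q,k)$ with $F(2\cos\theta,k)=\phi_\theta(k)$ as in \eqref{e:spherical}. An untempered eigenfunction $\phi_j$ of $T_q$ with eigenvalue $\lambda_j$, $|\lambda_j|>2+\epsilon$, corresponds to $\lambda_j=\pm2\cosh\beta_j$ with $\beta_j>\beta_0(\epsilon):=\arcosh(1+\epsilon/2)>0$, and one reads off from \eqref{e:spherical} that $S_k\phi_j=\mu_j^{(k)}\phi_j$ with
$$|\mu_j^{(k)}|=\frac2{q+1}\cosh(k\beta_j)+\frac{q-1}{q+1}\,\frac{\sinh((k+1)\beta_j)}{\sinh\beta_j}\ \geq\ \tfrac16\,e^{k\beta_j}\ \geq\ \tfrac16\,e^{k\beta_0(\epsilon)},$$
using $\sinh((k+1)\beta)\geq\cosh(k\beta)\sinh\beta\geq\tfrac12 e^{k\beta}\sinh\beta$ and $q\geq2$. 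In particular $S_k$ is invertible on $\operatorname{span}\{\phi_j\}_j$ with the norm of the inverse $\lesssim_\epsilon e^{-k\beta_0(\epsilon)}$. Given $f=\sum_j\phi_j$ with $\Pi_\epsilon f=0$, I would therefore write $f=S_k h$ with $h:=\sum_j(\mu_j^{(k)})^{-1}\phi_j$ the element of that span mapped to $f$; by orthogonality $\|h\|_2\leq6\,e^{-k\beta_0(\epsilon)}\|f\|_2$, so the whole question reduces to an $L^2\to L^\infty$ bound for $S_k$ via $\|f\|_\infty=\|S_k h\|_\infty\leq\|S_k\|_{L^2\to L^\infty}\|h\|_2$.

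For $\|S_k\|_{L^2\to L^\infty}$ I would argue by hand from \eqref{e:cycles}. The kernel $K_k$ of $S_k$ is nonnegative, with row sums $\sum_y K_k(x,y)=S_k\mathbf{1}(x)=\tfrac{q+1}{q}q^{k/2}$ (the normalised cardinality of a sphere of radius $k$ in $\mathcal T_{q+1}$) and entrywise bound $\sup_{x,y}K_k(x,y)=\|S_k\|_{L^1\to L^\infty}\lesssim_\delta q^{(-1/2+\delta)k}$. Hence, by Cauchy--Schwarz, $\|K_k(x,\cdot)\|_2^2\leq\big(\sup_y K_k(x,y)\big)\big(\sum_y K_k(x,y)\big)\lesssim_\delta q^{\delta k}$ for each $x$, i.e. $\|S_k\|_{L^2\to L^\infty}=\sup_x\|K_k(x,\cdot)\|_2\lesssim_\delta q^{\delta k/2}$. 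Substituting, $\|f\|_\infty\lesssim_{\delta,\epsilon}q^{\delta k/2}e^{-k\beta_0(\epsilon)}\|f\|_2=q^{(\delta/2-\beta_0(\epsilon)/\log q)k}\|f\|_2$, and choosing $\delta$ in \eqref{e:cycles} small enough that $\delta<2\beta_0(\epsilon)/\log q$ (legitimate, at the cost of an $\epsilon$-dependent implied constant) gives the lemma with $\delta(\epsilon):=\beta_0(\epsilon)/\log q-\delta/2>0$.

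The only step with any content is the first one: extracting from \eqref{e:spherical} that the untempered values $\mu_j^{(k)}$ grow like $e^{k\beta_j}$ --- so that $S_k$ really is boundedly invertible on the untempered subspace --- and tracking the dependence of $\beta_0$ on $\epsilon$; everything after that is soft. As a (cleaner but slightly less robust) alternative for the second step one can use $\|S_k\|_{L^2\to L^\infty}^2=\|S_k^2\|_{L^1\to L^\infty}$ together with the tree product expansion $S_k^2=\sum_{i=0}^k c_i S_{2k-2i}$ with $c_i=O_q(1)$ and $c_0=1$, which gives $\|S_k\|_{L^2\to L^\infty}=O_q(1)$ but only for $k\leq N/2$; this still suffices after shrinking $\delta(\epsilon)$.
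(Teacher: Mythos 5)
Your proof is correct, and it reaches the same conclusion from the same two ingredients as the paper --- the exponential growth of the $S_k$-eigenvalues on untempered eigenfunctions (read off from the spherical function \eqref{e:spherical}) and a pointwise $L^2$-bound on the kernel $K_k(x,\cdot)$ coming from \eqref{e:cycles} plus Cauchy--Schwarz --- but the route through these ingredients is genuinely different. The paper fixes a point $x$ realizing $\|f\|_\infty$, argues that one may assume each $\phi_j(x)>0$ (discarding summands if necessary), and then squeezes $|f(x)|$ between the lower bound $S_kf(x)\geq\lambda_{\epsilon,k}f(x)$ and the upper bound $|S_kf(x)|\leq\|K_k(x,\cdot)\|_2\|f\|_2$. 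You instead invert $S_k$ on $\operatorname{span}\{\phi_j\}$, writing $f=S_kh$ with $\|h\|_2\lesssim_\epsilon e^{-k\beta_0(\epsilon)}\|f\|_2$ by orthogonality, and then apply $\|S_k\|_{L^2\to L^\infty}$. Your version avoids the ``discard negative $\phi_j(x)$'' reduction entirely, which is a small gain in robustness and transparency; it also treats $k$ of both parities uniformly, since you work with $|\mu_j^{(k)}|$ rather than needing $\lambda_{j,k}>0$, whereas the paper restricts to even $k$ and then adjusts constants for odd $k$. A further nice touch is that you get $\|K_k(x,\cdot)\|_2\lesssim_\delta q^{\delta k/2}$ cleanly from the product of the sup-entry bound with the explicit row sum $\tfrac{q+1}{q}q^{k/2}$; the paper states this same bound but without spelling out the computation. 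The two proofs cost the same and prove the same statement; the paper's has the flavor of a maximum principle argument, yours is a spectral resolvent/normal-family style argument.
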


\begin{proof}
Let $x$ be a point at which $|f(x)|=\|f\|_\infty$.  We may assume without loss of generality that $f(x)>0$, and that each $\phi_j(x)>0$ --- since otherwise we could omit the negative ones and increase $\|f\|_\infty$ while reducing $\|f\|_2$.

We first consider $k$ even and write
\begin{eqnarray}
S_k f (x) & =  &\sum_j \lambda_{j,k} \phi_j(x) \geq \lambda_{\epsilon, k}\sum_{j}\phi_j(x)\nonumber\\ 
& \geq & \lambda_{\epsilon,k} |f(x)|\label{large S_k action}
\end{eqnarray}
where $\lambda_{\epsilon,k}$ is the eigenvalue for the convolution operator $S_k$ acting on an untempered eigenfunction of $T_q$-eigenvalue $2+\epsilon$, and the $\lambda_{j,k}>\lambda_{\epsilon, k}$ denote the respective eigenvalues of $S_k$ on each untempered $\phi_j$.  Using the expression of the spherical function \eqref{e:spherical}, we have
$$\lambda_{\epsilon,k} =  \frac2{q+1}\cos (k \theta) + \frac{q-1}{q+1}\frac{\sin (k+1)\theta}{\sin \theta}$$ 
where $\theta$ is defined by the relation $2\cos(\theta) = 2+\epsilon$, i.e. $\theta = -i \arcosh(1+\eps/2)$.
We see that $\lambda_{\epsilon,k}$ grows exponentially in $k$, with exponent growth determined by $\epsilon$; i.e. 
$$\log_q{\lambda_{\epsilon, k}} = C(\epsilon)\cdot k + O(1)$$
Here we have taken $k\in 2\mathbb{Z}$ to avoid sign issues with the negative side of the spectrum--- for $k$ even, all $\lambda_{j,k}>0$ and the inequality $\lambda_{j,k}>\lambda_{\epsilon,k}\gtrsim q^{C(\epsilon)k}$ holds for all $j$.

On the other hand, we have 
$$||K_k(x,\cdot)||_2 \lesssim_\delta q^{k\delta/2}$$
by assumption (\ref{e:cycles}), and so we see from Cauchy-Schwarz
\begin{eqnarray*}
|S_kf(x)| & \leq & \|K_k(x,\cdot)\|_2 \|f\|_2\\
& \lesssim_\delta & q^{k\delta/2}\| f\|_2
\end{eqnarray*}
Combining this with (\ref{large S_k action}) gives
\begin{eqnarray*}
\lambda_{\epsilon, k}|f(x)| & \leq & q^{k\delta/2} \|f\|_2
\end{eqnarray*}

Choosing $\delta(\epsilon)<C(\epsilon)$ small enough that $k\delta(\epsilon)<\log_q{\lambda_{\epsilon,k}}$, we get
$$|f(x)| \lesssim_{\delta(\epsilon)} q^{-k\delta(\epsilon)/2}$$
as required.  Further adjusting $\delta(\epsilon)$ if necessary, we can guarantee that the inequality extends to odd values of $k$ as well.
\end{proof}

The above Lemma will allow us to handle situations where there are many untempered eigenfunctions, because any linear combination of them still has sup-norm that decays exponentially relative to the $L^2$-norm.  Now we need to exploit the $L^2$-norm in order to bound $S_n\Pi_\epsilon$ from $L^1\to L^\infty$.

\begin{lemma}\label{l2 l1}
For any function on the graph, we have
$$\|f\|_2 \leq \|f\|_1$$
\end{lemma}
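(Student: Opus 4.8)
The plan is to exploit the fact that on a finite graph $\mathcal{G}$ the natural measure is the counting measure, under which every point has mass $1$, so the $L^p$ spaces are nested in the direction opposite to the usual one for probability spaces. Concretely, I would write $\|f\|_2^2 = \sum_{x \in \mathcal{G}} |f(x)|^2$ and $\|f\|_1 = \sum_{x \in \mathcal{G}} |f(x)|$, and compare the two sums termwise after squaring.

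The key steps, in order: first, set $a_x = |f(x)| \geq 0$ for $x \in \mathcal{G}$, so that $\|f\|_2^2 = \sum_x a_x^2$ and $\|f\|_1^2 = \left(\sum_x a_x\right)^2 = \sum_x a_x^2 + \sum_{x \neq y} a_x a_y$. Second, observe that the cross terms $\sum_{x\neq y} a_x a_y$ are nonnegative since all $a_x \geq 0$. Hence $\|f\|_1^2 \geq \sum_x a_x^2 = \|f\|_2^2$, and taking square roots gives $\|f\|_2 \leq \|f\|_1$. That is the entire argument; one could alternatively phrase it as $\|f\|_2 \le \|f\|_{2} \le \|f\|_1$ via the elementary inequality $\left(\sum |a_x|^p\right)^{1/p}$ being non-increasing in $p$ for the counting measure, but the direct expansion is cleanest.

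There is essentially no obstacle here — the statement is a soft, purely formal fact about sequence spaces $\ell^p(\mathbb{Z}_{\geq 0})$ restricted to a finite index set, and the only thing to be careful about is the direction of the inequality (it is the reverse of the probability-space inclusion $L^p \hookrightarrow L^q$ for $p \geq q$), which is exactly why the counting-measure normalization of the graph matters. If one wanted to be slightly more general one could record the stronger chain $\|f\|_\infty \le \|f\|_{p'} \le \|f\|_2 \le \|f\|_p \le \|f\|_1$ for $1 \le p \le 2 \le p' \le \infty$ on the counting-measure graph, but only the $\|f\|_2 \le \|f\|_1$ instance is needed to feed into the bound on $\|S_n \Pi_\epsilon\|_{L^1 \to L^\infty}$ via Cauchy--Schwarz and Lemma~\ref{sup norm decay}.
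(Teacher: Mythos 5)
Your proof is correct. The paper takes a slightly different but equally elementary route: it normalizes $\|f\|_1 = 1$, notes that this forces $|f(x)| \le 1$ pointwise, and then uses $|f(x)|^2 \le |f(x)|$ to get $\|f\|_2^2 \le \|f\|_1 = 1$. You instead expand $\|f\|_1^2 = \bigl(\sum_x a_x\bigr)^2 = \sum_x a_x^2 + \sum_{x\neq y} a_x a_y$ and drop the nonnegative cross terms. Both proofs are one-step verifications of the general fact that $\ell^p$ norms are non-increasing in $p$ under counting measure; yours has the mild advantage of being manifestly homogeneous (no normalization needed), while the paper's pointwise comparison $a^2 \le a$ for $a \in [0,1]$ is perhaps the more standard textbook form. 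Either works equally well for the purpose here, which is just to pass from an $L^1$ hypothesis to the $L^2$ control needed in Lemma~\ref{l1 l infty}.
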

\begin{proof}
Without loss of generality we may assume that $\|f\|_1=1$.  This means in particular that $|f(x)|\leq 1$ for all $x$ in the graph, and so $|f(x)|^2\leq |f(x)|$ for every $x$, yielding
\begin{eqnarray*}
\| f\|_2^2 & = & \sum_x |f(x)|^2\\
& \leq & \sum_x |f(x)|\\
& \leq & \|f\|_1 = 1
\end{eqnarray*}
Taking square-roots gives $\|f\|_2\leq 1$ as required.
\end{proof}

\begin{lemma}\label{l1 l infty}
Under the assumption (\ref{e:cycles}), we have that for every $\epsilon>0$ there exists $\delta(\epsilon)>0$ such that
$$ \|S_n \Pi_\epsilon\|_{L^1\to L^\infty} \lesssim_\epsilon q^{-n/4}$$
for $n\leq \delta(\epsilon) \cdot N$, where $N$ is from assumption (\ref{e:cycles}).
\end{lemma}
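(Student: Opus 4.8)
The plan is to decompose $S_n\Pi_\epsilon$ into a piece supported on the part of the spectrum with $T_q$-eigenvalue close to the tempered interval (where the associated spherical function coefficients are polynomially bounded) and a piece supported further out; then bound the first piece using the injectivity-radius-type cycle count of \eqref{e:cycles}, and absorb the second piece into the span of untempered eigenfunctions controlled by Lemma~\ref{sup norm decay}. More precisely, since $S_n = q^{n/2}F(T_q,n)$ with $F(2\cos\theta,n)=\phi_\theta(n)$ the spherical function \eqref{e:spherical}, and since on the tempered spectrum $\theta\in\R$ we have $|\phi_\theta(n)|\lesssim n$, while on the $\epsilon$-neighborhood the imaginary part of $\theta$ is $O(\sqrt\epsilon)$ so that $q^{n/2}|\phi_\theta(n)|\lesssim n\, q^{C\sqrt\epsilon\,n}$ for a uniform constant $C$, the operator $S_n\Pi_\epsilon$ has $L^2\to L^2$ norm at most $n\,q^{C\sqrt\epsilon\,n}$.

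Then I would estimate the $L^1\to L^\infty$ norm directly via the kernel. Write $S_n\Pi_\epsilon = S_n - S_n(\id-\Pi_\epsilon)$. For the first term, $\|S_n\|_{L^1\to L^\infty} = \sup_{x,y}|K_n(x,y)|\lesssim_\delta q^{(-1/2+\delta)n}$ by \eqref{e:cycles}, provided $n\le N$, which is far better than $q^{-n/4}$. For the second term $S_n(\id-\Pi_\epsilon)$, note that applied to a $\delta$-function $\delta_x$ it produces a function $S_n(\id-\Pi_\epsilon)\delta_x$ lying in the span of untempered eigenfunctions; but I cannot apply Lemma~\ref{sup norm decay} to it directly since its $L^2$-norm need not be small. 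Instead, I would use the self-adjoint splitting: $\|S_n(\id-\Pi_\epsilon)\|_{L^1\to L^\infty}$ is controlled by bounding the operator first $L^1\to L^2$ and then $L^2\to L^\infty$. Writing $S_n(\id-\Pi_\epsilon) = S_{n/2}(\id-\Pi_\epsilon)\cdot(\id-\Pi_\epsilon)S_{n/2}$ (using that $S_{n/2}$ commutes with the spectral projection and $n$ is even, say), the operator $(\id-\Pi_\epsilon)S_{n/2}$ maps $L^1\to L^2$ with norm $\lesssim \|(\id-\Pi_\epsilon)S_{n/2}\delta_x\|_2$, and this image lies in the untempered span; but now I need the $L^2\to L^\infty$ bound on $S_{n/2}(\id-\Pi_\epsilon)$ which by the adjoint of Lemma~\ref{sup norm decay}-type reasoning is $\lesssim_\epsilon q^{-\delta(\epsilon)n/2}$. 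Combined with the trivial $L^1\to L^2$ bound $\|(\id-\Pi_\epsilon)S_{n/2}\|_{L^1\to L^2}\le \|S_{n/2}\|_{L^1\to L^2}+\|\Pi_\epsilon S_{n/2}\|_{L^1\to L^2}$, where the first is $\le\|S_{n/2}\|_{L^1\to L^\infty}^{1/2}\|S_{n/2}\|_{L^1\to L^1}^{1/2}$-type via interpolation and Lemma~\ref{l2 l1}, and using that $\|S_{n/2}\|_{L^1\to L^1}\le q^{n/4}$ crudely, this yields $\|S_n(\id-\Pi_\epsilon)\|_{L^1\to L^\infty}\lesssim_\epsilon q^{n/4}q^{-\delta(\epsilon)n/2}$, which is $\le q^{-n/4}$ once $n\le\delta'(\epsilon)N$ with $\delta'(\epsilon)$ chosen so that the exponential gain in Lemma~\ref{sup norm decay} beats the crude $q^{n/4}$ loss — here is exactly where we need $n$ small compared to $N$, since $\delta(\epsilon)$ from Lemma~\ref{sup norm decay} is itself only valid for $k\le N$.

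Putting the two pieces together, $\|S_n\Pi_\epsilon\|_{L^1\to L^\infty}\le \|S_n\|_{L^1\to L^\infty}+\|S_n(\id-\Pi_\epsilon)\|_{L^1\to L^\infty}\lesssim_\delta q^{(-1/2+\delta)n}+ C_\epsilon q^{(1/4-\delta(\epsilon)/2)n}$, and choosing $n\le\delta(\epsilon)\cdot N$ with the constant $\delta(\epsilon)$ adjusted so that $1/4 - \delta(\epsilon)/2 \le -1/4$ fails to be the issue — rather, re-examining, the clean statement $q^{-n/4}$ comes out provided we have enough room, i.e. provided $n/4 < \delta(\epsilon)N\cdot(\text{something})$; I would simply track constants to land on the stated $q^{-n/4}$ for $n\le\delta(\epsilon)N$ after possibly shrinking $\delta(\epsilon)$.

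The main obstacle I anticipate is the bookkeeping around the untempered part $S_n(\id-\Pi_\epsilon)$: one cannot invoke Lemma~\ref{sup norm decay} on a bare $\delta$-function because its projection to the untempered span has $L^2$-norm of order one, not small, so the exponential decay is bought only after pairing an $L^2\to L^\infty$ estimate (which genuinely uses the lemma, or its Cauchy--Schwarz proof) against an $L^1\to L^2$ estimate, and one must verify that the crude bound used for the latter does not eat the entire exponential gain — this is what forces $n$ to be a small multiple of $N$ rather than $n\le N$, and getting the dependence of $\delta(\epsilon)$ right (it must beat both the $C\sqrt\epsilon\,n$ growth of the $L^2\to L^2$ norm and the $q^{n/4}$-type loss) is the delicate point. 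Everything else is a routine assembly of Lemmas~\ref{sup norm decay}, \ref{l2 l1}, and the spherical-function identity \eqref{e:spherical}.
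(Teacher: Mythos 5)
There is a genuine gap. The key move you talk yourself out of is exactly the one the paper makes. You write that you ``cannot apply Lemma~\ref{sup norm decay} to it directly since its $L^2$-norm need not be small,'' but that misreads the lemma: it does not require the $L^2$-norm to be small, only finite, and it delivers a decaying factor $q^{-\delta(\epsilon)N}$ \emph{in $N$} (the assumption's range), not in $n$. The paper's argument is simply: for $\|f\|_1=1$, Lemma~\ref{l2 l1} gives $\|f\|_2\le 1$, hence $\|f_{\text{untemp}}\|_2\le 1$; since $S_n$ preserves the untempered subspace, Lemma~\ref{sup norm decay} applied with $k=N$ yields
$$\|S_n f_{\text{untemp}}\|_\infty \lesssim_\epsilon q^{-\delta(\epsilon)N}\|S_n f_{\text{untemp}}\|_2 \lesssim_\epsilon q^{-\delta(\epsilon)N}\,q^{n/2},$$
and this is $\lesssim q^{-n/4}$ precisely once $n$ is a small enough multiple of $N$. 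The exponential gain is bought from $N$ being large, not from any smallness of the $L^2$ piece.

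Your substitute route does not close, for two reasons. First, the factorization $S_n(\id-\Pi_\epsilon)=S_{n/2}(\id-\Pi_\epsilon)\cdot(\id-\Pi_\epsilon)S_{n/2}$ is false: $S_n\ne S_{n/2}^2$; the product $S_{n/2}^2$ is a combination of $S_0,S_2,\dots,S_n$, not the single sphere average $S_n$. Second, and more fatally, even granting some corrected $L^1\to L^2$ / $L^2\to L^\infty$ split, you invoke the decay of Lemma~\ref{sup norm decay} at scale $n/2$ rather than at the maximal admissible scale $N$, arriving at a bound of the form $q^{(1/4-\delta(\epsilon)/2)n}$. For small $\delta(\epsilon)$ this exponent is \emph{positive}; shrinking $\delta(\epsilon)$ makes it worse, and imposing $n\le\delta(\epsilon)N$ has no effect on a quantity that depends only on $n$. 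So ``tracking constants'' cannot land this on $q^{-n/4}$. The fix is not bookkeeping but the structural point above: use Lemma~\ref{l2 l1} to pass from $L^1$ control of $f$ to an $L^2$ bound of size $1$ on $f_{\text{untemp}}$, and then extract the full $q^{-\delta(\epsilon)N}$ decay from Lemma~\ref{sup norm decay} with $k=N$.

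Finally, the opening $L^2\to L^2$ estimate on $S_n\Pi_\epsilon$ with the $q^{C\sqrt\epsilon\,n}$ factor is not used in this lemma at all; in the paper it only enters later, in the interpolation step of Section~\ref{untempered}. Including it here is harmless but a distraction from the missing idea.
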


\begin{proof}
Consider $\|f\|_1=1$, and write $f= \Pi_\epsilon f + f_\text{untemp}$ as the decomposition of $f$ into an ``$\epsilon$-tempered" piece and a second piece consisting of all sufficiently untempered spectral components of $T_q$-eigenvalue greater than $2+\epsilon$ in absolute-value.  

We have
\begin{eqnarray*}
\|S_n \Pi_\epsilon f \|_\infty & = & \|S_n f - S_nf_\text{untemp}\|_\infty\\
& \leq & \|S_n f\|_\infty + \|S_n f_\text{untemp}\|_\infty\\
& \lesssim & q^{-n/4}\|f\|_1 + \|S_n f_\text{untemp}\|_\infty\\
& \lesssim & q^{-n/4} + \|S_n f_\text{untemp}\|_\infty
\end{eqnarray*}
by the assumption (\ref{e:cycles}) on the kernel which guarantees that
$$\|S_n\|_{L^1\to L^\infty} \lesssim q^{-n/4}$$

So it remains to show that 
$$\|S_n f_\text{untemp}\|_\infty \lesssim_\epsilon q^{-n/4}$$
But since $\|f\|_1=1$, Lemma~\ref{l2 l1} says that $\|f\|_2\leq 1$, which means in particular that $\|f _\text{untemp}\|_2\leq 1$.  And since $S_n$ preserves the eigenspaces, $S_n f_\text{untemp}$ also belongs to the kernel of $\Pi_\epsilon$, and thus we can apply Lemma~\ref{sup norm decay} to show that
\begin{eqnarray*}
\|S_n f_\text{untemp}\|_\infty & \lesssim_\epsilon & q^{-\delta(\epsilon) N} \|S_n f_\text{untemp}\|_2\\
& \lesssim_\epsilon & q^{-\delta(\epsilon) N} q^{n/2} \|f_\text{untemp}\|_2\\
& \lesssim_\epsilon & q^{n/2-\delta(\epsilon) N} 
\end{eqnarray*}
where we have used the fact that $\|S_n\|_{L^2\to L^2}$ is bounded\footnote{Here, a uniform spectral gap could improve the bounds a bit.} by $q^{n/2}$ and the claim follows as long as $n \leq \delta(\epsilon) N$.
\end{proof}

\vspace{.2in}

The rest of the argument is as follows.  Fix $p>2$, and $\epsilon(p)$ to be chosen later.  We assume $N\gtrsim \log|\mathcal{G}|$ and set $n=\delta(\epsilon(p))N$.  The factor $\delta(\epsilon(p))$ will appear in the final estimate for $\|\psi\|_p/\|\psi\|_2$.

We have
\begin{eqnarray*}
\|S_n \Pi_{\epsilon(p)}\|_{L^1\to L^\infty} & \lesssim_{\epsilon(p)} & q^{-n/4}\\
\|S_n \Pi_{\epsilon(p)}\|_{L^2\to L^2} & \leq & \lambda_{\epsilon(p), n} \leq q^{n\epsilon(p)}
\end{eqnarray*}
We pick $\epsilon(p)$ small enough that interpolation gives an exponential decay bound
$$\|S_n \Pi_{\epsilon(p)} \|_{L^{p'}\to L^p} \lesssim_{\epsilon(p)} q^{-0.01n}$$
for all $n < \delta(\epsilon(p)) N$, and thus in the $TT^*$ argument we will get a bound of 
$$\frac{\|\psi\|_p}{\|\psi\|_2} \lesssim_{\epsilon(p)} \frac{1}{\sqrt{\delta(\epsilon(p)) N}}\lesssim_p \frac{1}{\sqrt{N}}$$ 
for all eigenfunctions $\psi$, with implied constant determined by $p$.

\section{Joint Eigenfunctions on the Sphere}\label{sphere}

In this section we apply the above arguments to the case of joint eigenfunctions of the Laplacian and an averaging operator over a free group of algebraic rotations.  For simplicity, we will give a self-contained proof of the result for $p>8$; to extend to the full strength of Theorem~\ref{t:sphere}, one couples the argument below with Sogge's ``freezing" method to exploit stationary phase cancellations and reduce the estimate to a one-dimensional Young's inequality, instead of the wasteful two-dimensional Young's inequality we apply below (see note at equation (\ref{e:part2})).

We let $\mathcal{H}_s$ be the eigenspace of spherical harmonics of eigenvalue $s(s+1)$, and $Z_s\in\mathcal{H}_s$ the zonal spherical harmonic centered at the north pole $0$.  These have the reproducing property that 
\begin{equation}\label{reproducing}
Z_s\ast \psi = \psi
\end{equation}
for any $\psi\in\mathcal{H}_s$. We have $Z_s(x) = Z_s^\flat (d(0,x))$ for a function $Z_s^\flat$ on $[0,\pi]$. We will abuse notation and drop the extra decoration, and see $Z_s$ itself as a function on $[0,\pi]$. 

We will use  standard bounds on Legendre polynomials \cite[Theorem 7.3.3]{SzegoBook}, which yield the estimate
\begin{equation}\label{Z bound}
Z_s(t) \lesssim \min\{s, \sqrt{s/t}\}
\end{equation}

\subsection{$L^p$ bounds for spherical harmonics, $p>8$}
In this section we motivate our argument through a simplified proof of Sogge's bounds (\cite{Sog86}) for spherical harmonics, in the easier case $p>8$ where we may simply apply Young's inequality.  

The reproducing property (\ref{reproducing}), together with orthogonality of eigenspaces, means that convolution with $Z_s$ is the projection to $\mathcal{H}_s$.  Thus it is the norm of this convolution $\chi_s: f\mapsto Z_s\ast f$ as an operator from $L^2(\mathbb{S}^2)\to L^p(\mathbb{S}^2)$ that we wish to estimate.  Since this convolution is a projection, we could equally well estimate the norm from $L^{p'}(\mathbb{S}^2)\to L^p(\mathbb{S}^2)$ for $\frac{1}{p'} = 1-\frac1p$ and take the square root, since
\begin{eqnarray*}
\|\chi_s\|^2_{L^2\to L^p} & = & \|\chi_s\chi_s^*\|_{L^{p'}\to L^p}\\
& = & \|\chi_s\|_{L^{p'}\to L^p}
\end{eqnarray*}

\begin{lemma}[$L^p$ bounds for spherical harmonics, $p>8$]\label{l:young}
We have
$$\|\chi_s\|_{L^2\to L^p}\lesssim s^{\frac12-\frac2p}$$
\end{lemma}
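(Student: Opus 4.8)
The plan is to estimate $\|\chi_s\|_{L^{p'}\to L^p}$ directly, since by the $TT^*$ identity recalled just before the lemma this equals $\|\chi_s\|_{L^2\to L^p}^2$, and then take square roots. Because $\chi_s$ is convolution with the radial kernel $Z_s$, Young's inequality gives
$$\|\chi_s f\|_p = \|Z_s \ast f\|_p \le \|Z_s\|_r \,\|f\|_{p'}$$
whenever $1 + \tfrac1p = \tfrac1r + \tfrac1{p'}$, i.e. $\tfrac1r = 1 - \tfrac2p + \tfrac1p = 1-\tfrac1p$... more carefully $\tfrac1r = 1+\tfrac1p-\tfrac1{p'} = \tfrac2p$, so $r = p/2$. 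Thus it suffices to show $\|Z_s\|_{p/2} \lesssim s^{1 - \tfrac4p}$, since squaring the resulting operator bound $\|\chi_s\|_{L^{p'}\to L^p}\lesssim s^{1-\frac4p}$ and taking the square root yields the claimed $s^{\frac12-\frac2p}$.

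First I would reduce the $L^{p/2}$ norm of $Z_s$ to an integral over $[0,\pi]$ against the surface measure, which in geodesic polar coordinates around the north pole is $\sin t\, dt \lesssim t\, dt$, so
$$\|Z_s\|_{p/2}^{p/2} \lesssim \int_0^\pi |Z_s(t)|^{p/2}\, t\, dt.$$
Then I would insert the pointwise bound \eqref{Z bound}, $Z_s(t)\lesssim \min\{s,\sqrt{s/t}\}$, and split the integral at the natural scale $t \sim 1/s$. On $[0,1/s]$ one uses $|Z_s(t)| \lesssim s$, contributing $\lesssim s^{p/2}\cdot s^{-2} = s^{p/2-2}$. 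On $[1/s,\pi]$ one uses $|Z_s(t)|\lesssim (s/t)^{1/2}$, contributing $\lesssim s^{p/4}\int_{1/s}^\pi t^{1-p/4}\,dt$; for $p>8$ the exponent $1-p/4 < -1$ makes this integral dominated by its lower endpoint $t\sim 1/s$, giving $\lesssim s^{p/4}\cdot s^{p/4-2} = s^{p/2-2}$ again. Hence $\|Z_s\|_{p/2}^{p/2}\lesssim s^{p/2-2}$, i.e. $\|Z_s\|_{p/2}\lesssim s^{1-4/p}$, as needed.

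The main thing to be careful about — rather than a genuine obstacle — is the convergence of the tail integral: the estimate $\int_{1/s}^\pi t^{1-p/4}\,dt \lesssim s^{p/4-2}$ requires $p/4 - 1 > 1$, i.e. $p > 8$, which is exactly the restriction stated in the lemma and flagged in the remark after Theorem~\ref{t:sphere}; for $2<p\le 8$ the endpoint $t\sim 1/s$ no longer dominates and one must instead exploit oscillation in $Z_s$ (Sogge's ``freezing'' / stationary phase argument) rather than the crude absolute-value bound \eqref{Z bound}. I should also double-check the bookkeeping of Young's exponents ($r=p/2$ is forced by $1+\tfrac1p = \tfrac1r + \tfrac1{p'}$ with $\tfrac1{p'}=1-\tfrac1p$) and that squaring $s^{1-4/p}$ and taking a square root indeed returns $s^{1/2-2/p}$. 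Everything else is a routine one-variable calculation.
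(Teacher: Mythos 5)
Your proposal is correct and follows essentially the same route as the paper: reduce $\|\chi_s\|_{L^2\to L^p}$ to $\|\chi_s\|_{L^{p'}\to L^p}$ via the projection/$TT^*$ identity, apply Young's inequality with exponent $r=p/2$, and bound $\|Z_s\|_{p/2}$ by splitting the radial integral at $t\sim 1/s$ using the pointwise bound \eqref{Z bound}, with the convergence of the tail exactly enforcing $p>8$.
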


\begin{proof}
Since we have explicit bounds on the kernel $Z_s$ of $\chi_s$, let us apply Young's inequality directly to estimate $\|\chi_s\|_{L^{p'}\to L^p}$:
\begin{eqnarray*}
\|\chi_s\psi\|_{L^p} & \leq & \|Z_s\|_{L^{p/2}}\cdot\|\psi\|_{L^{p'}}.
\end{eqnarray*}
The bound (\ref{Z bound}) gives
\begin{eqnarray}
\|Z_s\|_{L^{p/2}} & \lesssim & \left(\int_{0\leq t\leq s^{-1}}|Z_s(t)|^{p/2}tdt \right)^{2/p} + \left(\int_{s^{-1}\leq t\leq \pi}|Z_s(t)|^{p/2} tdt\right)^{2/p}\nonumber\\
& \lesssim & \left( s^{p/2}\cdot s^{-2}\right)^{2/p} + \left(\int_{s^{-1}\leq t\leq \pi} s^{p/4}t^{-p/4+1} dt		\right)^{2/p}\nonumber\\
& \lesssim & s^{1-\frac4p} + s^{1/2} \left(	t^{-\frac{p}{4}+2}\Big|_{s^{-1}}^\pi		\right)^{2/p}\nonumber\\
& \lesssim & s^{1-\frac4p} + s^{1/2}\left(	s^{\frac{p}{4}-2}	\right)^{2/p}\nonumber\\
& \lesssim & s^{1-\frac4p} + s^{1/2}s^{\frac{1}{2}-\frac4p}\nonumber\\
& \lesssim & s^{1-\frac4p} \label{p8}
\end{eqnarray}
as long as $p>8$.  
Substituting this back into Young's inequality above gives 
$$\|\chi_s\psi\|_{L^p}  \lesssim s^{1-\frac4p}\|\psi\|_{L^{p'}}$$
whereby 
$$\|\chi_s\|_{L^2\to L^p}^2 = \|\chi_s\|_{L^{p'}\to L^p} \lesssim s^{1-\frac4p}$$
as required.
\end{proof}

\subsection{The Improvement for Joint Eigenfunctions}
In order to apply the results of the previous sections, we wish to average together many copies of the kernel centered at different points determined by the averaging operator.  To this end, note that the essential contribution to the norm of $\chi_s$ comes from the $s^{-1+\delta}$ neighborhood of the center point; see precise statement in (\ref{e:kernel localization}) below.  Thus the contribution of points outside the radius of $s^{-1+\delta}$ is negligible, and we may safely combine copies of the kernel $Z_s$, centered at points that are $s^{-1+\delta}$-separated, without affecting the estimates.

Thus we must show that for our group of algebraic rotations, the image of any point under a word of length $n$ in the generators consists of points separated by a distance of $s^{-1+\delta }$.  This is where the algebraicity will come in, as in \cite{BLML}.  However, we cannot expect this to hold for all images at all points, since every rotation fixes a pair of antipodal points on the sphere, and so e.g. any fixed point of one of the generators will be fixed by powers of the generator, and thus will be fixed by (at least) $n$ words of length $\leq n$.  This is where we will need to use the assumption on the stabilizers, to ensure that we have only linear growth of the non-trivial stabilizers, and do not have exponential growth (which would violate Condition (\ref{e:cycles})).  

First, we need the following Lemma, essentially copied from \cite[Lemma 4.2]{BLML}:
\begin{lemma}\label{l:algebraicity}
There exists $c>0$ depending only on the generating set, such that for all $N<c\log{s}$ and all $x\in\mathbb{S}^2$, there exist at most $2$ words $g$ of length $N$ in our generators such that  $d(x,g.x)\leq s^{-1/4}$.  
\end{lemma}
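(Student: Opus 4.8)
The plan is to quantify the separation between distinct images $g.x$ and $h.x$ of a point under words $g,h$ of length at most $N$ via a height/denominator estimate, exactly in the spirit of \cite[Lemma 4.2]{BLML}. First I would reduce the statement about words of length $N$ to a statement about words of bounded length: if $g_1,\dots,g_k$ ($k\le$ number of words of length $N$) all satisfy $d(x,g_i.x)\le s^{-1/4}$, then for any two of them $d(g_i.x,g_j.x)\le 2s^{-1/4}$, and $g_i^{-1}g_j$ is a word of length at most $2N$ that moves $x$ by at most $2s^{-1/4}$. So it suffices to bound the number of words $w$ of length $\le 2N$ with $d(x,w.x)\le 2s^{-1/4}$, and then show this number is at most $3$ (so that among the original $g_i$ at most $2$ are distinct, the rest coinciding); the nontrivial stabilizer hypothesis is what prevents this count from growing, since the words fixing $x$ form a cyclic group and hence contribute $O(N)$ — but crucially the ones that move $x$ by a tiny but nonzero amount are the ones we must rule out.

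Next I would set up the arithmetic. Since the $g_j$ have algebraic entries, fix a number field $K$ containing all the matrix entries; a word $w$ of length $m$ has entries in $K$ with (logarithmic) height $O(m)$, with the implied constant depending only on the generating set (heights add under multiplication, up to bounded error). Now suppose $w$ is a word of length $\le 2N$ with $w.x\ne x$. The quantity $d(x,w.x)$ is comparable to $\|x - w.x\|$ in $\R^3$; I want a lower bound. The key algebraic input: if $w.x\neq x$ then $x$ is not an eigenvector of $w$ for eigenvalue $1$, equivalently $(w-I)x\ne 0$. The rotation $w$ has a rotation angle $\phi_w$, and $w.x = x$ iff $x$ lies on the rotation axis. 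In general $\|w.x-x\| \gtrsim |\sin(\phi_w/2)|\cdot \sin(\angle(x,\text{axis}))$. One then needs both factors bounded below: the rotation angle $\phi_w$ of a word of length $m$, when nonzero, satisfies $|\sin(\phi_w/2)|\gtrsim e^{-Cm}$ by a height bound on $\mathrm{tr}(w)\in K$ (a nonzero algebraic number of height $O(m)$ is $\gtrsim e^{-Cm}$ away from the value giving angle $0$, i.e. from $\mathrm{tr}=3$), and similarly the distance from $x$ to the axis — here one must be more careful since $x$ itself need not be algebraic. This last point is the heart of the matter.

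The resolution for the non-algebraic $x$: rather than bounding $d(x,\text{axis}_w)$ directly, compare two words. If $w_1,w_2$ are distinct words of length $\le 2N$ with distinct axes, both moving $x$ by $\le 2s^{-1/4}$, then $x$ lies within $O(s^{-1/4})$ of both axes (after accounting for the $|\sin(\phi_w/2)|$ factor, which is $\gtrsim e^{-CN}\gg s^{-1/4}$ once $N< c\log s$ with $c$ small); two distinct great circles (the axes, viewed as poles, correspond to great circles, or one argues with the axes as points) that are both within $O(s^{-1/4})$ of a common point $x$ must themselves be within $O(s^{-1/4})$ of each other, forcing $\|w_1 w_2^{-1}\cdot(\text{axis data})\|$ small; but the axis directions are algebraic of height $O(N)$ (eigenvectors of $w_i$, solutions of polynomial systems over $K$ with coefficients of height $O(N)$), so two distinct such directions are $\gtrsim e^{-CN}$ apart — contradiction once $c$ is chosen so that $e^{-CN}> C' s^{-1/4}$, i.e. $c < 1/(4C)$. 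Hence at most one axis occurs among long words moving $x$ slightly; the words sharing that axis commute with a fixed rotation and so lie in the stabilizer-type cyclic subgroup of $\SO(3)$, and by the cyclic-stabilizer hypothesis only $O(N)$ of them are words of length $\le 2N$ — but in fact among these, those actually fixing $x$ number at most two modulo the deepest one...

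I would finish by pushing the last count down to the stated constant $2$: among words on the single surviving axis, the ones with rotation angle closest to $0$ (in the sense of the $|\sin(\phi_w/2)|\gtrsim e^{-CN}$ bound being nearly sharp) are few, and combined with the reduction $g_i^{-1}g_j$ from the first paragraph this yields that at most $2$ of the original length-$N$ words $g$ are pairwise distinct while satisfying $d(x,g.x)\le s^{-1/4}$ — the two coming from the pole of a generator and its inverse, which matches the heuristic that every rotation fixes an antipodal pair. The main obstacle, and the step deserving the most care, is precisely the passage from algebraicity of the group elements to a usable lower bound on $d(x,g.x)$ for an \emph{arbitrary, possibly transcendental} $x\in\Sh^2$: this must be routed through comparing pairs of words and exploiting that the rotation axes (not $x$ itself) are algebraic of controlled height, together with a careful bookkeeping of how the $|\sin(\phi_w/2)|$ and axis-distance factors interact with the choice of $c$.
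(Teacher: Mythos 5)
Your proposal uses the same core toolkit as the paper: logarithmic heights over a fixed number field $K$, linear growth of heights under word multiplication, the lower bound $|\iota(\alpha)|\geq e^{-[K:\Q]h(\alpha)}$ for nonzero $\alpha$, freeness to guarantee that words and their commutators/ratios are nontrivial, and the cyclic-stabilizer observation to cap the count at $2$. You also correctly identify and address the central subtlety --- that $x$ itself need not be algebraic --- by comparing pairs of words and routing the argument through the algebraicity of the rotations rather than of $x$. This is the same strategy the paper follows.

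Two places where you diverge in implementation: (i) to separate distinct axes you appeal directly to the algebraicity and bounded height of the eigenvectors (solutions of linear systems over $K$ with height $O(N)$), whereas the paper uses a commutator trick --- $[g_1,g_2]$ is a word of length $\leq 4N$, hence if nontrivial it is a rotation by angle $\geq C^{-4N}$, which forces the axes of $g_1,g_2$ to be $\gtrsim C^{-4N}$ apart without ever solving for eigenvectors. Both work; the commutator route is a little cleaner because it recycles the single height estimate on matrix entries. (ii) You work with ratios $g_i^{-1}g_j$ (length $\leq 2N$), while the paper works with the length-$N$ words and their commutators directly.

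The one genuine gap is your final counting step: after reducing to a single surviving axis you observe that the relevant words lie in a cyclic subgroup and say this gives ``$O(N)$'', then trail off on pushing the bound down to $2$. Your reduction to ratios actually makes this slightly harder than it needs to be: you end up needing to count integers $a$ with $|g_1 h^a| = N$ for a fixed $g_1$ and primitive $h$, which is a coset count. The paper's route is cleaner: the original length-$N$ words $g$ with $x$ near their axis all share that one axis, hence commute as rotations, hence (by injectivity of a free group into $\SO(3)$) commute in the free group, hence are powers $h_0^n$ of a single primitive element $h_0$. For such an $h_0$, the word length $|h_0^n|$ is strictly increasing in $|n|$ (it is $(|n|-1)\ell_c + \ell_0$ for the cyclic length $\ell_c>0$), so at most one positive and one negative exponent give length exactly $N$, hence at most $2$ words. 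You should replace the vague closing paragraph with this argument.
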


As we mentioned above, we have to allow some ``bad" words, since if $x$ is stabilized by some word $g$, then powers of $g$ will also stabilize $x$.  Moreover a small neighborhood (eg., a ball of radius $s^{-1}$) of the fixed point will also stay close to itself under these powers of $g$.  But since the stabilizer of any such $x$ is cyclic, there can be only $2$ words of a given length, $g^n$ and $g^{-n}$, that fix $x$ and have the same axis of rotation.  

\begin{proof}[Proof of Lemma~\ref{l:algebraicity}]
Let $K$ be a finite extension of $\mathbb{Q}$ containing all entries of the matrices $g _ 1, \dots, g _ M$.
Recall the notion of (logarithmic) height of an algebraic number $\alpha \in K$ from  e.g.~\cite[Ch.~1]{Bombieri-Gubler}. What is important for us is that it is a nonnegative real number measuring the complexity of nonzero $\alpha \in K$ (e.g. for a rational number given in lowest terms by $p/q$ we have that $h(p/q)= \log \max (p,q)$) with the following properties:
\begin{itemize}
\item $h(\alpha \beta) \leq h (\alpha) + h (\beta)$
\item $h (\alpha + \beta) \leq h (\alpha) + h (\beta) + \log 2$ (cf. \cite[\S1.5.16]{Bombieri-Gubler})
\item for any embedding $\iota: K \hookrightarrow \mathbb{C}$ and any algebraic number $\alpha \neq 0$ we have that $ |{\iota (\alpha)}| \geq e^{-[K:\mathbb{Q}]h(\alpha)}$ (cf. \cite[\S1.5.19]{Bombieri-Gubler}).
\item $h (\alpha ^{-1}) = h (\alpha)$
\end{itemize}
It follows that if we set, for $g \in \SO (3, \overline{\mathbb{Q}})$, the height $h (g)$ of $g$ to be the maximum of the heights of its coordinates, then for any $g _ 1, g _ 2 \in \SO (3, \overline {\mathbb{Q}})$ we have
\begin{align*}
h (g _ 1 g _ 2) &\leq h (g _ 1) + h (g _ 2) + O (1) \\
h (g _ 1 ^{-1}) &\leq O (h (g _ 1) + 1)
.\end{align*}
Thus if $w (g _ 1, \dots, g _ M)$ is any word of length $N$ in the given generating set
\begin{equation*}
h (w (g _ 1, \dots, g _ M)) \lesssim N (1+\max \{h (g_1) ,\dots, h (g _ M )\})
.\end{equation*}
Since $g _ 1, \dots, g _ M$ generate a free group, it follows from this and the above basic property of heights, that for any reduced word $w$ of length $N$ if $g=w(g_1, \dots g _ M)$ then $ \|{g-1}\| \geq C^{-N}$ for some $C$ depending only on the generating set, in other words $g$ is a rotation of angle $\theta$ around its axis with 
$|{\theta}| \geq C ^ {- N}$.
Moreover, since the commutator of two words of length $N$ is a word of length at most $4N$, the commutators satisfy a similar bound (up to adjusting the constant), and therefore we deduce that the distinct axes of rotation of words of length $N$ are also separated by ${C}^{-4N}$.

Choosing $c$ small enough, we can guarantee that 
$${C}^{-4N} > {C}^{-4c\log{s}} = s^{4c\log{{C}}}> s^{-1/4}$$ 
and thus each word of length $\leq N$ in the generators is a rotation through an angle  $\geq s^{-1/4}$.  This means that $d(x,g.x)>s^{-1/4}$ for all points $x$ that are not close to a fixed point of $g$.  But by the bound on the commutators, we deduce that any point $x$ can be close to the fixed point of at most one axis of rotation, and due to the condition that all non-trivial stabilizers are cyclic, this means that $x$ can be close to the fixed point of at most $2$ words of length $N$, since any such words must be powers $g^n$ and $g^{-n}$ of the same element $g$.  
\end{proof}

\begin{proof}[Proof of Theorem~\ref{t:sphere}]
We keep the notations of the previous sections, and study the operator
$$W_{N,\alpha,s}= \sum_{n=1}^{N/2} \cos(2n\alpha)P_{2n}(T_q/2) \chi_s$$
where $\chi_s$ is the projection operator to the $s(s+1)$-eigenspace with kernel $Z_s$, the averaging operator $T_q$ is defined as in Section \ref{ss:sphere}, and $P_{2n}$ is the Chebyshev polynomials of the first kind of degree $2n$.  Since $\chi_s$ is the projection to $\mathcal{H}_s$, any joint eigenfunction $\psi\in\mathcal{H}_s$ of $T_q$-eigenvalue $2\cos\alpha$ will satisfy $\chi_s\psi=\psi$ and thus by Lemma~\ref{l:unitarity}
$$\|\psi\|_p \lesssim \frac1N\|W_{N,\alpha,s}\psi\|_p \leq \frac1N\|W_{N,\alpha,s}\|_{L^2\to L^p}\|\psi\|_2$$
and so it remains to show that 
$$\|W_{N,\alpha,s}\|_{L^2\to L^p} \lesssim \sqrt{N} s^{\frac12-\frac2p},$$
which will follow from an estimate of the type
\begin{equation}\label{e: sph lp' lp} 
\|S_n  \chi_s\|_{L^{p'} \to L^p} \lesssim s^{1-\frac{4}{p}} \, q^{-\beta_p n}
\end{equation}
for some $\beta_p >0$, by the arguments in Section~\ref{ramanujan}.  Naturally we will take $N\gtrsim_p \log{s}$, in a way which will depend on $p$.

To prove (\ref{e: sph lp' lp}), observe that the kernel $Z_s$ of $\chi_s$ is localized (in $L^{p/2}$) in the ball of radius $s^{-1/2}$, since 
\begin{eqnarray}
\left\|Z_s \Big|_{t\geq s^{-1/2}}\right\|_{L^{p/2}} & \lesssim & \left(\int_{s^{-1/2}\leq t\leq \pi}|Z_s(t)|^{p/2} tdt\right)^{2/p}\label{e:kernel localization}\\
& \lesssim &  \left(\int_{s^{-1/2}\leq t\leq \pi} s^{p/4}t^{-p/4+1} dt		\right)^{2/p}\nonumber\\
& \lesssim & s^{1/2} \left(	t^{-\frac{p}{4}+2}\Big|_{s^{-1/2}}^\pi		\right)^{2/p}\nonumber\\
& \lesssim & s^{1/2}\left(	s^{\frac{p}{8}-1}	\right)^{2/p}\nonumber\\
& \lesssim &  s^{1/2}s^{\frac{1}{4}-\frac2p}\nonumber\\
& \lesssim & s^{\frac34-\frac2p} \nonumber
\end{eqnarray}
Note that for $p>8$, this exponent $\frac34-\frac2p < 1-\frac4p$ ; i.e. it is of lower order than the $L^{p/2}$-norm of $Z_s$ inside the ball of radius $s^{-1/2}$, as computed in (\ref{p8}).

Recall now that
$$ \chi_s u (x) = Z_s * u (x) = \int Z_s(d(x,y)) u(y) \, dy.$$
 We split $Z_s$ into three parts, $Z_s^{(1)} + Z_s^{(2)} + Z_s^{(3)}$, with $Z_s^{(1)}(\theta) = \eta_s(\theta) Z_s(\theta)$ and $Z_s^{(3)}(\theta) = \eta_s(\pi - \theta) Z_s(\theta)$, where $\eta \in C_0^\infty(\R)$ is supported in $[0, s^{-1/4}]$ and is identically $1$ on $[0, s^{-1/2}]$ . We denote by $\chi_s = \chi_s^{(1)} + \chi_s^{(2)} + \chi_s^{(3)}$ the corresponding convolution operators.
Recall that we are interested in estimating the norm of $S_n \chi_s =S_n \chi_s^{(1)} + S_n\chi_s^{(2)} + S_n\chi_s^{(3)}$. The kernel of $S_n \chi_s$ is given by 
$$ [S_n \chi_s] (x,y) = \frac1{q^{n/2}} \sum_{|g| = n} Z_s(d(g x,y)),$$
so $S_n \chi_s$ is a normalized sum of convolutions with zonal spherical harmonics centered at different points.

We first treat the case of $S_n\chi_s^{(2)}$. Using Young's inequality, we want to compute:
$$ \left( \int \left| [S_n \chi_s^{(2)}] (x,y)\right|^{p/2} \, dx\right)^{2/p} = \left(\int \left| [S_n \chi_s^{(2)}] (x,y)\right|^{p/2} \, dy\right)^{2/p}, $$
which is bounded by
$$\frac1{q^{n/2}} \sum_{|g| = n} \|Z_s^{(2)}\|_{p/2} \leq q^{n/2} \|Z_s^{(2)}\|_{p/2} \leq q^{n/2} s^{\frac34-\frac2p} .$$
Now if $n \leq 2 \delta_p \log_q s$ we obtain by Young's inequality
\footnote{Here one inserts Sogge's more refined estimates for $\|Z_s^{(2)}\|_{L^2\to L^p}$ for all $p>6$ (see eg. \cite{Sog86,SoggeBook}) in order to deduce Theorem~\ref{t:sphere} for all $p>6$ as well.  }
\begin{equation}\label{e:part2}
\|S_n \chi_s^{(2)}\|_{L^{p'} \to L^p} \lesssim  s^{\frac34-\frac2p + \delta_p}.
\end{equation}
We choose $\delta_p$ so that $\frac34-\frac2p + \delta_p < 1 - \frac14$, which is possible as long as $p > 8$.

By symmetry, the cases of $S_n\chi_s^{(1)}$ and $S_n\chi_s^{(3)}$ are identical. Here we use the fact that for any given $y \in \Sh^2$ there are at most $2$ words $g$ of length $n$ such that $d(gx,y) \leq s^{-1/4}$, according to Lemma \ref{l:algebraicity}. This means we have at most $2$ terms contributing to the sum 
$$ [S_n \chi_s^{(1)}] (x,y) = \frac1{q^{n/2}} \sum_{|g| = n} Z_s^{(1)}(d(g x,y))$$
for each $x,y$, and so up to a factor of $2$ the kernel of $S_n\chi_s^{(1)}$ is a normalized average over a disjoint union of copies of $Z_s^{(1)}$
 \begin{eqnarray*}
\left( \int \left| [S_n \chi_s^{(1)}] (x,y)\right|^{p/2} \, dy\right)^{2/p} & \leq & \frac2{q^{n/2}} \left\|\sum_{|g| = n} Z_s^{(1)}(g\cdot) \right\|_{p/2}\\
&  \leq & 2q^{-n/2} \cdot q^{2n/p} \|Z_s^{(1)}\|_{p/2}\\
& \lesssim &  q^{n(\frac{4-p}{2p})} s^{1-\frac4p}.
\end{eqnarray*}
By Young's inequality we thus have
\begin{equation}\label{e:part1}
\|S_n \chi_s^{(1)}\|_{L^{p'} \to L^p} \lesssim q^{-(\frac{p-4}{2p}) n} s^{1-\frac4p},
\end{equation}
which gives the necessary estimate\footnote{In fact, this part of the estimate holds for all $p>4$; it is only the bound on the $S_n\chi_s^{(2)}$ term that determines the applicable values of $p$.}.

Putting \eqref{e:part1} and \eqref{e:part2} together and choosing $\delta_p$ small enough we obtain
$$\|S_n \chi_s\|_{L^{p'} \to L^p} \lesssim  q^{-\beta_p n} s^{1-\frac4p},$$
for some $\beta_p > 0$. As in Lemma~\ref{l:lplq bis}, this implies a similar bound 
$$\|P_{2n}(T_q/2)\|_{L^{p'}\to L^p}  \lesssim q^{-\beta_p n} s^{1-\frac4p}$$
and the argument of the proof of Proposition~\ref{p:cluster graph} carries through to show
$$\|W_{N,\alpha,s}\|_{L^2\to L^p} \lesssim \sqrt{N} s^{\frac12-\frac2p}$$
as required.

\end{proof}

\end{document}